\newcommand{\private}[1]{{#1}}  % garder pour une version priv\'ee  ;
\newcommand{\private}[1]{}
\newcommand{\PL}[1]{\private{\footnote{\textbf{Pascal: }{#1}}}}
\newtheorem{theo}{Theorem}[section]    
\newtheorem{de}[theo]{Definition}
\newtheorem{prop}[theo]{Proposition}
\newtheorem{lem}[theo]{Lemma}    
\newtheorem{cor}[theo]{Corollary}
\newtheorem{rem}[theo]{Remark}
\newtheorem{quest}[theo]{Question}
\newtheorem{exemple}[theo]{Exemple}
\newcommand*{\defeq}{\mathrel{\vcenter{\baselineskip0.5ex \lineskiplimit0pt
                     \hbox{\scriptsize.}\hbox{\scriptsize.}}}%
                     =}
\newcommand{\dela}{\partial A}
\newcommand{\delb}{\partial B}
\newcommand{\delt}{\partial T}
\newcommand{\apl}{A_{PL}}
\newcommand{\delw}{\partial W}
\newcommand{\delW}{\partial W}
\newcommand{\bq}{\mathbb{Q}}
\newcommand{\Ho}{\operatorname{H}}
\newcommand{\hoker}{\text{hoker }}
\newcommand{\sn}{s^{-n}\#}
\def\commutatif{\ar@{}[rd]|{\circlearrowleft}}
\def\pullback{\ar@{}[rd]|{pb}}
\def\comhotopie{\ar@{}[rd]|{\sim}}
\newcommand{\calO}{{\mathcal O}}
\newcommand{\qi}{\stackrel{\simeq}\longrightarrow}
\newcommand{\betab}{\xymatrix{\beta \colon B \ar@{->>}[r]& \delb}}
\newcommand{\BQ}{\mathbb{Q}}
\newcommand{\id}{\operatorname{id}}
\title[Pretty rational models]{Pretty rational
  models for Poincar\'e duality pairs}
\author{Hector Cordova Bulens}
\address{H.C.B. and P.L.: IRMP, Universit\'e catholique de Louvain, 2 Chemin du Cyclotron, B-1348 Louvain-la-Neuve, Belgium}
\email{hector.cordova@uclouvain.be}
\author{Pascal Lambrechts}
\email{pascal.lambrechts@uclouvain.be}
\author{Don Stanley}
\address{D.S.: University of Regina, Department of Mathematics\\
College West, Regina, CANADA}%
\email{stanley@math.uregina.ca}%
\thanks{H.C.B. and P.L. are supported by the belgian federal fund PAI Dygest}%
\keywords{Poincar\'e duality,
Sullivan model, Commutative differential graded
  algebra}%
\subjclass[2010]{55P62, 55M05}
\date{18 May 2015}
\begin{document}
\maketitle
\begin{abstract}
We prove that a large class of Poincar\'e duality pairs admit rational
models (in the sense of Sullivan) of a particularly nice form associated
to some Poincar\'e duality CDGA. These models have applications in
particular to the construction of rational models of configuration
spaces in compact manifolds with boundary. 
\end{abstract}

\PL{All the footnotes tagged \textbf{Pascal} appear in the private version only and are
  comments to the authors. To get a ``public'' version change the
  patameter of the usepackage opt at the beginning of the latex file}
\section{Introduction}
Sullivan theory \cite{Sul:inf} encodes the rational homotopy type of a
simply-connected space of finite type, $X$, into a commutative
differential graded algebra (CDGA), $(A,d_A)$, such that $\Ho(A,d_A)
\cong \Ho^\ast(X;\BQ)$ and which is called a \emph{CDGA model} of $X$
(see Section \ref{s:RHT} fo a quick recapitulation on that theory). In
\cite{LaSt:PD}  we proved that when $X$ is a simply-connected 
Poincar\'e duality space
(the most important example being  a closed manifold),
we can always construct a CDGA model whose underlying algebra
satisfies Poincar\'e duality.  These Poincar\'e duality CDGA models
are often convenient and were used for example in \cite{LaSt:FM2Q} and
in \cite{LaSt:remFMk}
%\PL{Ajouter reference \texttt{LaSt:rmkFMk} dans la biblio}
to construct
nice rational models for configuration spaces in closed manifolds,
 or in \cite{FeTh:BVstr}%\PL{Ajouter reference \texttt{FeTh:stG} dans la   biblio}
 to study the Chas-Sullivan product on the free loop
 space%\PL{better example of application than\texttt{FeTh:stG}}
.

The aim of this paper is to exhibit convenient CDGA models for
Poincar\'e duality \emph{pairs} of spaces, like compact manifolds with
boundary.   Such a model should 
 be a CDGA morphism between two CDGAs representing each element of the
 pair.   Our main result is that a very large class of Poincar\'e
 duality pairs admit what we call \emph{pretty models} (Definition
 \ref{D:prettymodel}.)
 More precisely we will show that the following Poincar\'e duality pairs admit such models:
\begin{itemize}
\item even dimensional disk bundles over a simply-connected closed
  manifold relative to their sphere bundles (Theorem \ref{T:diskbdl});
\item   Poincar\'e duality pairs $(W,\delW)$ where $\delW$ is
  2-connected and retracts rationally on its half-skeleton
  (Definition \ref{D:retrhalfsk} and Theorem \ref{T:etruscus});
\item the complement of a subpolyhedron of high codimension in a
  closed manifold (relative  to its natural boundary)
  (end of Section \ref{S:pretty}  and Theorem \ref{T:etruscus} applied to Exemple  \ref{Ex:retract}
  (\ref{Exitem:complpoly}).) 
\end{itemize}

Let us describe roughly the form of these \emph{pretty models} (see
Section \ref{S:pretty} for more details).   
A pretty model for $\delW   \hookrightarrow W$ is a CDGA morphism between mapping cones.
\begin{equation}
\label{E:prettyintr}
\varphi \oplus \id : P \oplus_{\varphi^!} ss^{-n} \#Q \longrightarrow Q\oplus_{\varphi\varphi^!} ss^{-n} \#Q
\end{equation}
where 
\begin{itemize}
\item $P$ is a Poincar\'e duality CDGA in dimension $n=\dim W$ (see
  Definition \ref{D:PDCDGA});
\item $\varphi : P \to Q$ is a CDGA morphism;
\item $ss^{-n} \#Q$ is the $(n-1)-th$ suspension of the linear dual $\#Q=$ \textrm{hom}$_\BQ (Q,\BQ)$;
\item $\varphi^! : s^{-n} \#Q \to P$ is a $P$-dgmodule morphism constructed out of $\varphi$ and the Poincar\'e duality isomorphism on $P$;
\item the CDGA structure on the mapping cones is the semi-trivial one
  described at Section \ref{s:semi_trivial} (which requires that
  $\varphi\varphi^!$ is \emph{balanced} in the sense of Definition \ref{D:balanced}.)
\end{itemize}
In the special case when $\delW = \emptyset$, we have $Q=0$ and we
recover a Poincar\'e duality CDGA model, $P$, for $W$ as in \cite{LaSt:PD}

Note also that the model
\[
Q \oplus_{\varphi\varphi^!} ss^{-n}\#Q
\]
is a Poincar\'e duality CDGA model in dimension $n-1$ for $\delW$.

When $\delW \neq \emptyset$, $W$ is not a Poincar\'e duality space and thus does not admit a Poincar\'e duality CDGA model.    However, often $W$ has a model which is an explicit quotient of a Poincar\'e duality space, as shows the following
\begin{prop}[Corollary \ref{Cor:P/I}]
%\label{}
If $(W,\delW)$ admits a pretty model $(\ref{E:prettyintr})$ and if
$\varphi$ is surjective, then $W$ has a CDGA model
\[
P/I
\]
where $P$ is a Poincar\'e duality CDGA and $I=\varphi^! (s^{-n}\#Q)$ is a diffe\-rential ideal.
\end{prop}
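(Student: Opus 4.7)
The plan is to construct an explicit CDGA quasi-isomorphism from the pretty model of $W$, namely $P \oplus_{\varphi^!} ss^{-n}\#Q$, onto the quotient algebra $P/I$. The natural candidate is the projection $\pi$ that is the quotient map $P \epi P/I$ on the first summand and is zero on $ss^{-n}\#Q$.

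First I would check that $I = \varphi^!(s^{-n}\#Q)$ is a differential ideal. It is a subcomplex because $\varphi^!$ is a chain map, and it is an ideal because $\varphi^!$ is a $P$-dgmodule morphism: for any $p \in P$ and $x \in s^{-n}\#Q$, one has $p \cdot \varphi^!(x) = \varphi^!(\varphi(p) \cdot x) \in I$, where $P$ acts on $s^{-n}\#Q$ through $\varphi$. Next I would verify that $\pi$ is a well-defined chain map: the only non-trivial point is that $\pi\bigl(d(sx)\bigr) = \pi\bigl(\varphi^!(x)\bigr) = [\varphi^!(x)] = 0$ in $P/I$, which is immediate from the definition of $I$. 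Compatibility with the multiplication follows from the semi-trivial structure recalled from Section \ref{s:semi_trivial}: products that leave the $P$-summand land in $ss^{-n}\#Q$ and are therefore killed by $\pi$, while the multiplication on the $P$-summand is simply the one of $P$, which descends to $P/I$.

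It then remains to see that $\pi$ is a quasi-isomorphism, i.e.\ that the mapping cone $P \oplus_{\varphi^!} ss^{-n}\#Q$ has cohomology concentrated in $H^*(P/I)$. Here the hypothesis that $\varphi$ is surjective is crucial: since $\varphi^!$ is constructed from $\varphi$ via the Poincar\'e duality isomorphism $P \cong s^{-n}\# P$, surjectivity of $\varphi$ dualizes to injectivity of $\#\varphi : \#Q \hookrightarrow \#P$ and therefore $\varphi^! : s^{-n}\#Q \hookrightarrow P$ is injective. The short exact sequence
\[
0 \longrightarrow P \longrightarrow P \oplus_{\varphi^!} ss^{-n}\#Q \longrightarrow ss^{-n}\#Q \longrightarrow 0
\]
of complexes then yields a long exact sequence whose connecting homomorphism, up to the suspension shift, is $\varphi^!$ itself. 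Since $\varphi^!$ is injective, the long exact sequence collapses and gives $H^*\bigl(P \oplus_{\varphi^!} ss^{-n}\#Q\bigr) \cong P/I$, realized precisely by $\pi$.

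The main obstacle is verifying that $\pi$ is a CDGA morphism with respect to the semi-trivial multiplication on the mapping cone; this is not a cohomological matter but a genuine algebraic check that mixed products land in the ideal annihilated by $\pi$. Once that is settled, the injectivity of $\varphi^!$ under the surjectivity hypothesis finishes the argument, and the conclusion that $W$ is modelled by $P/I$ follows since the pretty model is already a CDGA model of $W$ by assumption.
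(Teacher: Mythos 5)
Your construction is the same as the paper's: the projection $\pi=(\mathrm{proj},0)$ onto $P/I$, the observation that $I$ is a differential ideal because it is the image of a $P$-dgmodule morphism, the dualization of the surjectivity of $\varphi$ into (chain-level) injectivity of $\varphi^{!}$, and your check that $\pi$ is multiplicative for the semi-trivial structure is correct. The problem is the final step. In the long exact sequence of $0\to P\to P\oplus_{\varphi^{!}}ss^{-n}\#Q\to ss^{-n}\#Q\to 0$ the connecting homomorphism is the map $\Ho(\varphi^{!})$ induced in cohomology, and injectivity of $\varphi^{!}$ as a chain map does not imply injectivity of $\Ho(\varphi^{!})$; in the situations this proposition is meant for it typically fails. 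For instance, for the pretty model of the complement of an unknotted $S^{k}\subset S^{n}$ with $n\geq 2k+3$, one can take $Q=\Ho^{*}(S^{k};\BQ)$ and $P$ a Poincar\'e duality model of $S^{n}$; then $\Ho^{n-k}(s^{-n}\#Q)\cong\BQ$ while $\Ho^{n-k}(P)=0$, so the connecting map is not injective and the long exact sequence does not collapse, even though $\pi$ is still a quasi-isomorphism (here $P/I$ computes $\Ho^{*}(S^{n-k-1})$, the cohomology of the complement). So the sentence ``since $\varphi^{!}$ is injective, the long exact sequence collapses'' asserts something false in general, and your argument does not establish that $\pi$ is a quasi-isomorphism.

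The repair is exactly the paper's one-line argument, and it uses only the chain-level injectivity you already proved: surjectivity of $\varphi$ gives the short exact sequence of complexes $0\to s^{-n}\#Q\stackrel{\varphi^{!}}{\to}P\to P/I\to 0$, and the mapping cone of an injective chain map is quasi-isomorphic to its cokernel via the canonical projection. Concretely, $\ker\pi=I\oplus ss^{-n}\#Q$ is, with the cone differential, the mapping cone of the isomorphism of complexes $\varphi^{!}\colon s^{-n}\#Q\to I$, hence acyclic; since $\pi$ is surjective with acyclic kernel, it is a quasi-isomorphism. With that substitution (and your verification that $\pi$ is a CDGA map, which the paper leaves implicit), the proof is complete.
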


These pretty models should be very convenient in many constructions
in rational homotopy theory on Poincar\'e duality pairs.    In
particular we use
 them in \cite{CLS:compl} and \cite{CLS:FWk}%\PL{Biblio: CLScompl et   CLS FWk}
 to obtain explicit models for the configuration spaces in a manifold with boundary.

Here is the plan of the paper. In Section \ref{s:map_con} we review
quickly basic facts and terminology about rational homotopy theory,
and we define the semi-trivial CDGA structure on some mapping cones.
In Section \ref{S:pretty} we define pretty models for Poincar\'e
duality pairs and we motivate this
definition by the example of the complement of a polyhedron in a
closed manifold. In Section \ref{S:diskbdl} we prove that even
dimensional disk bundles over simply-connected Poincar\'e duality
spaces admit pretty models.
Section \ref{S:PDCDGA} is devoted to the construction of nice
Poincar\'e duality CDGA modelling a given CDGA whose cohomology
satisfies Poincar\'e duality; these results will be needed in the next
section.
We prove in Section
\ref{s:retracthalf} that any Poincar\'e duality pair whose boundary is
$2$-connected and retracts rationally on its half skeletton admits a pretty
model. In the last section we wonder whether every Poincar\'e duality
pair admits a pretty model.

\section{CDGA's,  dgmodules, and semi-trivial CDGA structures on mapping cones}
\label{s:map_con}

\bigskip

\subsection{ Rational homotopy theory} \label{s:RHT}%
In this paper we will use the standard tools and results of rational
homotopy theory,  following the notation and terminology of \cite{FHT:RHT}. Recall that $\apl$ is
the Sullivan-de Rham contravariant functor and that for a 1-connected space of
finite type, $X$,
$\apl(X)$ is a commutative differential graded algebra (CDGA for
short, always non-negatively graded.)
Any CDGA  weakly equivalent to $\apl(X)$ is called a \emph{CDGA model
  of $X$} and it completely encodes the rational homotopy type of $X$.  
Similarly a CDGA model of a map of spaces $X\to Y$ is a CDGA morphism
weakly equivalent to $\apl(f)\colon\apl(Y)\to\apl(X)$.
All our dgmodules and CDGAs are over the field $\bq$.
 A CDGA, $A$, is \emph{connected} if $A^0=\BQ$. A \emph{Poincar\'e
   duality CDGA}
is a connected CDGA whose underlying algebra satisfies Poincar\'e duality (see
Definition \ref{D:PDCDGA} for a precise definition.)

\subsection{Mapping cones and semi-trivial CDGA structures}
\label{s:semi_trivial}
Let $A$ be a CDGA and 
 let $R$ be an $A$-dgmodule. We will denote by $s^k R$ the k-th
 suspension of $R$, i.e. $(s^k R)^p= R^{k+p}$, and for  a map of
 $A$-dgmodules, $f\colon R \to Q$,  we denote by $s^k f$ the  k-th
 suspension of  $f$. 
For example, $s^{-n}\BQ$ is a dgmodule concentrated in degree $n$.
Furthermore, we will use $\#$ to denote the linear dual of a vector space, $\#V=hom(V,\mathbb{Q})$, and $\#f$ to denote the linear dual of a map $f$.  A dgmodule is of \emph{finite type} if it is of finite dimension in every degree.    If $M$ is a dgmodule, we write $M^{> k}=0$ to express that $M^i=0$ for each $i>k$; similarly we will write $M^{\geq k}=0$, $M^{<k}=0$, etc.

If $f\colon Q \to R$ is an $A$-dgmodule morphism, \emph{the mapping cone} of $f$ is the $A$-dgmodule 
\[
C(f)\defeq (R\oplus  sQ, \delta)
\]
defined by $R\oplus sQ$ as an $A$-module and with a differential
$\delta$ such that $\delta (r,sq) = (d_R (r) +f(q), -sd_Q(q))$.   We also write $C(f)=R \oplus_f sQ$.   When $f=0$, we just write $C(0)=R \oplus sQ$.

When
$R=A$, the mapping cone $C(f\colon Q\to A)$  can be equipped with a unique
commutative graded algebra (CGA) structure that extends the algebra
structure on $A$ and  the $A$-dgmodule structure on $sQ$,  and such
that $(s q) \cdot (sq')=0$, for $q,q' \in Q$. We will call this
structure the \emph{semi-trivial structure} on the mapping cone
$A\oplus_fsQ$ (see \cite[Section 4]{LaSt:PE}).      

\begin{de}
\label{D:balanced} Let $A$ be a CDGA. An $A$-dgmodule morphism $f:Q\to A$ is \emph{balanced} if, for each $x,y\in Q$:
\begin{equation}
f(x)\cdot y = x \cdot f(y).\label{Eq:anodyne}
\end{equation} \end{de}

\par\noindent
The importance of this notion comes from the following proposition:
\begin{prop}
\label{p:nagata}
Let $Q$ be an $A$-dgmodule and $f\colon Q \to A$ be an $A$-dgmodule morphism. If $f$ is balanced  
then the mapping cone $C(f)=A\oplus_f sQ$ endowed with the semi-trivial CGA
structure is a CDGA. 
\end{prop}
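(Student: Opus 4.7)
The plan is to verify directly that the differential $\delta$ on $C(f) = A \oplus_f sQ$ is a derivation for the semi-trivial CGA structure. Commutativity and associativity of the product are already built into that structure (associativity is automatic because the two potential obstructions involve triple products containing the factor $(sq_1)\cdot(sq_2) = 0$, so they collapse on both sides); only compatibility with $\delta$ remains to be checked.

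By bilinearity it suffices to treat four cases, corresponding to the two subspaces $A$ and $sQ$ in which each factor can lie.

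\textbf{Case 1:} $x,y \in A$. The Leibniz rule is immediate, since $\delta|_A = d_A$ and $A$ is a CDGA.

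\textbf{Case 2:} $x\in A$, $y=sq\in sQ$ (and symmetrically). Expanding the definitions, the $A$-component of $\delta(x\cdot y)$ involves $f(x\cdot q)$, and the $A$-component of $\delta(x)\cdot y + (-1)^{|x|}x\cdot\delta(y)$ involves $x\cdot f(q)$; these agree because $f$ is $A$-linear. The $sQ$-components match because $Q$ is an $A$-dgmodule and $f$ commutes with differentials. No hypothesis beyond $f$ being a dgmodule morphism is used.

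\textbf{Case 3:} $x=sq_1$, $y=sq_2$. Here $x\cdot y = 0$ so $\delta(x\cdot y)=0$, and we must show the right-hand side of the Leibniz rule also vanishes. The $sQ$-components of $\delta(sq_i)$ contribute nothing, as they multiply against elements of $sQ$. What survives is the product of the $A$-components, giving, up to a Koszul sign,
\[
f(q_1)\cdot sq_2 \;\pm\; sq_1\cdot f(q_2)
\;=\; s\bigl(f(q_1)\cdot q_2\bigr) \;\pm\; s\bigl(q_1\cdot f(q_2)\bigr).
\]
This vanishes exactly when $f(q_1)\cdot q_2 = q_1\cdot f(q_2)$ (after absorbing the sign via graded commutativity of $A$), i.e.~precisely by the balanced condition \eqref{Eq:anodyne}.

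The conceptual content is that balancedness is not an auxiliary bookkeeping hypothesis but exactly the identity forced by the Leibniz rule on $sQ\otimes sQ$; the other three cases are formal. The only real care needed is tracking the Koszul signs in Case 3 under the degree convention $(sR)^p=R^{p+1}$, to confirm that the sign that emerges matches the one already supplied by graded commutativity in $A$, so that $\eqref{Eq:anodyne}$ is used as stated rather than with a twist.
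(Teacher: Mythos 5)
Your proof is correct and follows essentially the same route as the paper: both reduce the claim to the Leibniz rule, dispose of the $A\cdot A$ and $A\cdot sQ$ products via the CDGA and dgmodule structures, and identify the vanishing required for $sQ\cdot sQ$ products as exactly the balanced condition. Your treatment is just a more detailed write-up (associativity remark, explicit sign bookkeeping) of the argument given there.
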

\begin{proof}
The only non trivially verified condition for $C(f)$ being a CDGA is the Leibniz rule for the differential. 
Let $a,a' \in A$ and $q,q' \in Q$. 
For products of the form $(a,0)\cdot (a',0)$ and of the form $(a,0)\cdot(0,sq)$ the Leibniz rule is verified  because $A$ is a CDGA and $Q$ is an $A$-dgmodule.      For products of the form $(0,sq)\cdot (0,sq')$, by semi-trivality of the CDGA structure of the mapping cone we have to verify that
\[(\delta (0,sq)) \cdot (0,sq') + (-1)^{|q|+1} (0,sq)\cdot(\delta (0,sq'))=0,\]
which is a consequence of the hypothesis that $f$ is balanced.
\end{proof}

\PL{I removed the following corallary and its proof which I guess is useless in the paper:\\
\begin{cor}
Let $f:Q\to A$ be an $A$-dgmodule morphism.  If there exists $r\geq 0$ such that $Q^k=0$ when $k<r$ or $k\geq 2r$, then $C(f)$ equipped with the semi-trivial structure is a CDGA.
\end{cor}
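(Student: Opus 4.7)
The plan is to reduce this corollary directly to Proposition \ref{p:nagata} by showing that the degree restriction on $Q$ forces $f$ to be balanced for purely dimensional reasons, after which the conclusion is immediate.

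First I would unpack the hypothesis: the condition $Q^k = 0$ for $k < r$ and for $k \geq 2r$ means $Q$ is concentrated in degrees $r \leq k \leq 2r-1$. In particular, any two homogeneous elements $x, y \in Q$ satisfy $|x| \geq r$ and $|y| \geq r$, so $|x| + |y| \geq 2r$.

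Next I would check the balancing condition of Definition \ref{D:balanced}. Since $f \colon Q \to A$ is a dgmodule morphism of degree zero, the elements $f(x)$ and $f(y)$ lie in $A$ with $|f(x)| = |x|$ and $|f(y)| = |y|$. Both products $f(x) \cdot y$ and $x \cdot f(y)$ are then computed through the $A$-module action on $Q$, hence lie in $Q$, with total degree $|x| + |y| \geq 2r$. But $Q^k = 0$ for $k \geq 2r$, so both sides of equation (\ref{Eq:anodyne}) vanish automatically, and $f$ is balanced (linearity extends the conclusion to non-homogeneous elements).

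Applying Proposition \ref{p:nagata} then gives that $C(f) = A \oplus_f sQ$ with the semi-trivial CGA structure is a CDGA. I do not anticipate any real obstacle: the only nontrivial verification in \ref{p:nagata} was the Leibniz rule for products $(0,sq) \cdot (0,sq')$, which reduced exactly to the balancing identity, and the upper bound $Q^{< 2r}$ is tailored precisely to force that identity to hold trivially.
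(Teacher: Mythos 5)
Your argument is correct and is exactly the paper's own: the stated proof is simply that $f$ is balanced for degree reasons (since both $f(x)\cdot y$ and $x\cdot f(y)$ land in $Q^{|x|+|y|}$ with $|x|+|y|\geq 2r$, hence vanish), followed by an appeal to Proposition \ref{p:nagata}. You have merely spelled out those degree reasons explicitly, so there is nothing to add.
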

\begin{proof}
The map $f$ is balanced for degree reasons.
\end{proof}
}

\begin{rem}
 In the rest of this paper, when a mapping cone is equipped with a CDGA structure it will be understood that is comes from the semi-trivial structure.
\end{rem}

\section{Pretty models}\label{S:pretty}

In this section we first  describe precisely what we call \emph{pretty
  models}, and next we motivate this definition by showing that these
models arise naturally as models of complements of a subpolyhedron in
a closed manifold.

Suppose given
\begin{itemize}
\item[(i)]  a connected Poincar\'e duality CDGA, $P$, in dimension $n$
  (see Definition \ref{D:PDCDGA} below);
\item[(ii)] a connected CDGA, $Q$;
\item[(iii)] a CDGA morphism, $\varphi : P \to Q$.
\end{itemize}
By definition of a Poincar\'e duality CDGA, there exists an isomorphism of $P$-dgmodules
\begin{equation}
\label{Eq:thetaP}
\theta_P : P \stackrel{\cong}{\longrightarrow}  s^{-n} \# P
\end{equation}
which is unique up to   multiplication by a non zero scalar because $P$ is a free $P$-module generated by 1.   

Consider the composite
\begin{equation}
\label{Eq:phishriek}
\varphi^! : s^{-n} \# Q \stackrel{s^{-n}\# \varphi}{\longrightarrow} s^{-n} \# P \stackrel{\theta^{-1}_P}{\longrightarrow} P,
\end{equation}
which is a morphism of $P$-dgmodule.    It is a \emph{shriek map} or \emph{top degree} map in the sense of \cite{LaSt:PE}, because $\Ho^n (\varphi^!)$ is an isomorphism.
\\
Assume that  
\[
\varphi \varphi^! : s^{-n} \# Q \to Q
\]
is balanced (see Definition \ref{D:balanced}).       Then $\varphi^!$ is also balanced  because the $P$-module structure on $s^{-n} \# Q$ is induced throught $\varphi$.  By Proposition \ref{p:nagata} the mapping cones
\[
{P \oplus_{\varphi^!} ss^{-n} \# Q } \quad \textrm{ and } \quad {Q \oplus_{\varphi\varphi^!}  ss^{-n} \# Q }
\]
are CDGA  and
\begin{equation}\label{Eq:wdf}
\varphi \oplus id : P \oplus_{\varphi^!} ss^{-n} \# Q \longrightarrow Q \oplus_{\varphi\varphi^!} ss^{-n} \# Q
\end{equation}
is a CDGA morphism.

\begin{de}\label{D:prettymodel}
Let $\varphi : P \longrightarrow Q$ be a   CDGA morphism with $P$ a connected Poincar\'e duality CDGA in dimension $n$, consider $\varphi^! : s^{-n} \# Q \longrightarrow P$ defined at (\ref{Eq:phishriek}), and assume that $\varphi\varphi^!$ is balanced.  Then the CDGA morphism
\begin{equation}\label{E:prettydef}
\varphi \oplus id : P \oplus_{\varphi^!} ss^{-n} \# Q \longrightarrow Q \oplus_{\varphi\varphi^!} ss^{-n} \# Q
\end{equation}
is called the \emph{pretty model associated to $\varphi$}.  If moreover $\varphi$ is surjective, we say that it is a \emph{surjective pretty model}.
\end{de}

\begin{prop}
If $(\ref{E:prettydef})$ is a surjective pretty model then the
projection
\[\pi\colon P\oplus_{\varphi^!}ss^{-n}\#Q\stackrel{\simeq}{\longrightarrow}
P/I,\]
where $I:=\varphi^!(s^{-n}\#Q)$, is a quasi-isomorphism of CDGA. 
\end{prop}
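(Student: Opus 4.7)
The plan is to exhibit $\pi$ as the quotient by an acyclic sub-dgmodule, and then conclude by the long exact sequence in homology. So the strategy reduces to two verifications: (a) that $\pi$ is a well-defined morphism of CDGAs, and (b) that its kernel is acyclic.

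For (a), I would first check that $I=\varphi^!(s^{-n}\#Q)$ is a differential ideal of $P$. Since $\varphi^!$ is a morphism of $P$-dgmodules, $I$ is closed under multiplication by $P$ and under $d_P$, so the quotient $P/I$ inherits a CDGA structure. I would then define $\pi\colon P\oplus_{\varphi^!} ss^{-n}\#Q \to P/I$ by $(p,sq')\mapsto [p]$ and check compatibilities: with the semi-trivial product, because $(sq_1')(sq_2')=0$ and $p\cdot sq'$ is sent to $0$; and with the differential, because $\pi\bigl(\delta(p,sq')\bigr)=[d_P p+\varphi^!(q')]=[d_P p]=d_{P/I}[p]$, where $\varphi^!(q')\in I$ by definition.

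For (b), as a graded module the kernel is $K:=I\oplus ss^{-n}\#Q$, and the induced differential on $K$ reads
\[
\delta(i,sq')=\bigl(d_P i+\varphi^!(q'),\,-s\,d_{s^{-n}\#Q}q'\bigr).
\]
Thus $K$ is exactly the mapping cone of the $P$-dgmodule map
\[
\widetilde{\varphi^!}\colon s^{-n}\#Q\longrightarrow I
\]
obtained from $\varphi^!$ by restricting its codomain to its image. The key claim is that $\widetilde{\varphi^!}$ is an \emph{isomorphism} of chain complexes. Surjectivity is tautological by definition of $I$. For injectivity I would use the hypothesis that $\varphi$ is surjective: then its linear dual $\#\varphi\colon \#Q\to \#P$ is injective, hence so is $s^{-n}\#\varphi$, and finally $\varphi^!=\theta_P^{-1}\circ s^{-n}\#\varphi$ is injective as the composite of an injection with an isomorphism. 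The mapping cone of an isomorphism is acyclic, so $K$ is acyclic.

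Applying the long exact sequence in homology to the short exact sequence $0\to K\to P\oplus_{\varphi^!} ss^{-n}\#Q\stackrel{\pi}{\to} P/I\to 0$ then yields that $\pi$ is a quasi-isomorphism. I do not expect any serious obstacle here; the only point that genuinely uses the surjective pretty model hypothesis is the injectivity of $\varphi^!$, which in turn is what makes the cone $K$ acyclic rather than merely a cone of a surjection.
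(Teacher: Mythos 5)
Your proposal is correct and follows essentially the same route as the paper: both rest on the observation that surjectivity of $\varphi$ dualizes to injectivity of $\varphi^!$, so that the short exact sequence $0\to s^{-n}\#Q\to P\to P/I\to 0$ forces the mapping cone $P\oplus_{\varphi^!}ss^{-n}\#Q$ to collapse quasi-isomorphically onto the cokernel $P/I$. You merely make explicit the steps the paper leaves implicit (that $\pi$ is a CDGA map and that its kernel is the acyclic cone of an isomorphism), which is fine.
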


\begin{proof}
$I=\varphi^! (s^{-n} \# Q)$   is a differential ideal of $P$ because it is the image of a morphism of $P$-dgmodules.  Since $\varphi$ is surjective, by duality, $\varphi^!$ is injective and we have a short exact sequence
\[
0 \longrightarrow s^{-n} \# Q \stackrel{\varphi^!}{\longrightarrow}  P \stackrel{proj}{\longrightarrow} P/I \longrightarrow 0.
\]
Thus
\[
\pi := (proj, 0) : P \oplus_{\varphi^!} ss^{-n} \# Q \longrightarrow P/I
\]
is a quasi-isomorphism of CDGA.
\end{proof}

\begin{cor}\label{Cor:P/I} 
If a Poincar\'e duality pair $(W,\partial W)$ admits a surjective
pretty model $(\ref{E:prettydef})$, then a CDGA model of $W$ is given
by $P/I$ where $P$ is a Poincar\'e duality CDGA.
\end{cor}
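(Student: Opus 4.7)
The plan is essentially to chain together the preceding proposition with the hypothesis, so the argument should be very short. First, I would unpack what it means for the pair $(W,\partial W)$ to ``admit a surjective pretty model'': by the terminology established in the introduction, this asserts that the CDGA morphism $(\ref{E:prettydef})$ is itself a CDGA model of the inclusion $\partial W \hookrightarrow W$. In particular, its source $P \oplus_{\varphi^!} ss^{-n}\# Q$ is a CDGA model of $W$, i.e.\ it is weakly equivalent as a CDGA to $\apl(W)$.

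Next, I would invoke the preceding proposition. Since $\varphi$ is surjective by hypothesis, the proposition supplies a quasi-isomorphism of CDGAs
\[
\pi\colon P \oplus_{\varphi^!} ss^{-n}\# Q \stackrel{\simeq}{\longrightarrow} P/I,
\]
where $I = \varphi^!(s^{-n}\# Q)$ is a differential ideal of $P$ (as noted there, because it is the image of the $P$-dgmodule morphism $\varphi^!$).

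Composing these two weak equivalences in the category of CDGAs, one concludes that $P/I$ is weakly equivalent to $\apl(W)$, which is precisely the assertion that $P/I$ is a CDGA model of $W$. Since $P$ is by definition a Poincar\'e duality CDGA (condition (i) in the setup for pretty models recalled at the beginning of Section \ref{S:pretty}), the corollary follows. I do not foresee any genuine obstacle here: the statement is a purely formal consequence of the preceding proposition combined with the definition of ``admitting a pretty model'', and no further computation is required.
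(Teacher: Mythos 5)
Your argument is correct and is exactly the paper's (implicit) reasoning: the corollary is stated without further proof precisely because it follows by composing the weak equivalence $\apl(W)\simeq P\oplus_{\varphi^!}ss^{-n}\#Q$, coming from the definition of admitting a pretty model, with the quasi-isomorphism $\pi$ of the preceding proposition. No gap here; nothing more is needed.
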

\vspace{5mm}

To motivate the above definition, let us show how pretty models appear naturally as models of the complement of a
subpolyhedron in a closed manifold.   
Let $M$ be a simply-connected  closed triangulated manifold and let $K$ be a
subpolyhedron and assume that $\dim(M)\geq2\dim(K)+3$.   The complement $M\setminus K$ is a open manifold
which is
 the interior of a compact manifold $W$ whose boundary, $\delW$, is
 the boundary of a thickening of $K$ in $M$.  
Let
\[\varphi\colon P\to Q\]
be a CDGA model of the inclusion $K\hookrightarrow M$ where $P$ is a
Poincar\'e duality CDGA model of $M$ and $Q^{\geq n/2-1}=0$ (by Proposition
\ref{DPADGC_alg_surjective},
such a model exists under our high codimension hypothesis.)
Consider the morphism $\varphi^!$ defined as the composite
($\ref{Eq:phishriek}$) which is a shriek map in the sense of
\cite[Definition 5.1] {LaSt:PE}.
By the main result of that paper, \cite[Theorem 1.2] {LaSt:PE}, the
pretty model $(\ref{Eq:wdf})$ is then a CDGA model of the inclusion $\partial
W\hookrightarrow W$ (using the fact that $\varphi\varphi^!=0$, and
hence is balanced, for
degree reasons.) This example show that pretty models appear naturally as
models of complements of a subpolyhedron of high codimension.

Notice also that any compact manifold with boundary, $W$, arises as the
complement of a subpolyhedron in a closed manifold. Indeed we can
consider the double $M:=W\cup_\partial W'$, where $W'$ is a second
copy of $W$. Then $M$ is a closed manifold and $W$ is the complement
of $W'$ in $M$. Of course, $W'$ is not necessarily of high homotopical
codimension, thus the discussion above does not apply stricly to this
example, which explains why this construction does not directly imply that any
compact manifold with boundary admit a pretty model.

Note also that when $W$ is the complement of a high codimension
subpolyhedron in a closed manifold, then the boundary, $\partial W$, 
retracts rationally on its half skeletton (in the sense of Definition
\ref{D:retrhalfsk}, see Example \ref{Ex:retract}), and therefore
Theorem \ref{T:etruscus} gives another proof that this complement
admits a pretty model.

\section{Disk bundles over Poincar\'e duality spaces}
\label{S:diskbdl}
In this section we prove that we can construct explicit pretty models
for the total space of an even-dimensional disk bundle over a closed manifold.
\begin{theo}\label{T:diskbdl}
Let $\xi$ be a vector bundle of even rank over a simply-connected
Poincar\'e duality space.   Then the pair $(D\xi,S\xi)$ of associated (disk, sphere) bundles admits a surjective pretty model.

Moreover this model can be explicitly constructed out of any CDGA model of the base and from the Euler class of the bundle.
\end{theo}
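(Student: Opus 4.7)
My plan is to construct the pretty model explicitly from a Poincar\'e duality CDGA model of the base together with a cocycle representative of the Euler class. Let $B$ be the base (of dimension $d$) and let $2k$ be the even rank of $\xi$, so that $n := d + 2k = \dim D\xi$. By \cite{LaSt:PD} I replace any CDGA model of $B$ by a quasi-isomorphic Poincar\'e duality CDGA $P_B$ of dimension $d$, with fundamental class $\omega_B \in P_B^d$, and I pick a cocycle $e \in P_B^{2k}$ representing the Euler class. Define
\[
P \defeq \bigl(P_B[v]/(v^2 - ev),\ d\bigr),\qquad |v| = 2k,\ dv = 0,
\]
extending the differential of $P_B$; this is compatible with the relation since $de = 0$. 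As a $P_B$-module $P = P_B \oplus P_B \cdot v$ with one-dimensional top component $P^n = \mathbb{Q} \omega_B v$, and a direct pairing check shows $P$ is a Poincar\'e duality CDGA of dimension $n$: the PD dual of $b \in P_B^i$ inside $P$ is $b^\# v \in P^{n-i}$, and the PD dual of $bv \in P^{i+2k}$ is $b^\# \in P^{n-i-2k}$, where $b^\#$ denotes the PD dual in $P_B$. Set $Q \defeq P_B$ and define $\varphi \colon P \to Q$ by $\varphi|_{P_B} = \id$ and $\varphi(v) = e$; this is a surjective CDGA morphism since $\varphi(v)^2 = e^2 = \varphi(ev)$ and $d\varphi(v) = de = 0$.

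To apply Definition \ref{D:prettymodel}, I compute the shriek map. A direct calculation shows that the PD iso $\theta_P \colon P \cong s^{-n}\#P$ satisfies $\theta_P(p_1 v) = \theta_{P_B}(p_1) \circ \varphi$, as one verifies via $\int_P p_1 v \cdot (b + b' v) = \int_{P_B} p_1 \cdot \varphi(b + b' v)$. From this and the definition \refN{Eq:phishriek} one extracts the formula
\[
\varphi^!(\alpha) = \theta_{P_B}^{-1}(\alpha) \cdot v,
\]
where $s^{-n}\#Q$ is identified with $s^{-d}\#P_B$ via the shift $n - d = 2k$. Consequently the image $I := \varphi^!(s^{-n}\#Q) = (v) \subset P$ is a differential ideal, and by Corollary \ref{Cor:P/I} the quotient $P/I = P_B$ models $D\xi \simeq B$. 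The composite $\varphi\varphi^!(\alpha) = e \cdot \theta_{P_B}^{-1}(\alpha) \in P_B$ is balanced because the identity $\varphi\varphi^!(\alpha) \cdot \beta = \alpha \cdot \varphi\varphi^!(\beta)$ in $s^{-n}\#Q$ reduces, after evaluation at $b \in P_B$ and dualization through $\theta_{P_B}$, to $\int_{P_B} p_\beta\, e\, p_\alpha\, b = \pm \int_{P_B} p_\alpha\, e\, p_\beta\, b$ (with $p_\gamma := \theta_{P_B}^{-1}(\gamma)$), which holds by graded commutativity of $P_B$ (the fact that $|e| = 2k$ is even absorbs any sign obstruction). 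Thus $\varphi \oplus \id$ is a well-defined surjective pretty model.

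Finally I would show that this pretty model is a CDGA model of the inclusion $S\xi \hookrightarrow D\xi$. The domain is quasi-isomorphic to $P/I = P_B$, which models $D\xi$. For the target I construct the explicit CDGA quasi-isomorphism
\[
\psi \colon Q \oplus_{\varphi\varphi^!} ss^{-n}\#Q \quism \bigl(P_B \otimes \Lambda x,\ dx = e\bigr),\qquad |x| = 2k - 1,
\]
by $\psi|_Q = \id$ and $\psi(s\alpha) = \theta_{P_B}^{-1}(\alpha) \cdot x$. Compatibility with the differential uses $\delta(s\alpha) = \varphi\varphi^!(\alpha) = e \cdot \theta_{P_B}^{-1}(\alpha)$ together with $dx = e$; multiplicativity uses $x^2 = 0$ and the semi-trivial structure; and $\psi$ is a quasi-isomorphism by a Gysin-sequence comparison, the codomain being the standard Koszul--Sullivan model of $S\xi$. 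The comparison square with the inclusion $P_B \hookrightarrow P_B \otimes \Lambda x$ commutes up to the natural chain homotopy $(p_0 + p_1 v,\, s\alpha) \mapsto \pm p_1\, x$, via $d(p_1 x) = dp_1 \cdot x \pm p_1 e$. The main obstacle is the sign and grading bookkeeping in verifying balancedness and in the homotopy commutativity of this square; conceptually the relation $v^2 = ev$ is dictated by the cohomology ring $H^*(B)[v]/(v^2 - e^2)$ of the double $D\xi \cup_{S\xi} D\xi \cong S(\xi \oplus \varepsilon^1)$, which is isomorphic to $H^*(B)[v]/(v^2 - ev)$ via the substitution $v \mapsto v + e/2$, so that $P$ serves in essence as a Poincar\'e duality CDGA model of the double.
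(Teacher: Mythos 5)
Your construction coincides with the paper's own proof: the same Poincar\'e duality CDGA $P=\bigl(Q\otimes\wedge\bar z\bigr)/\bigl(\bar z^{2}-e\bar z\bigr)$ built from a Poincar\'e duality model $Q$ of the base and a cocycle $e$ for the Euler class, the same surjective $\varphi$ with $\varphi(\bar z)=e$, the same identification of $\varphi^{!}$ with multiplication by $\bar z$ (the paper gets it by normalizing $\theta_P$, you by an explicit choice of orientation), the same balancedness check, and the same identification of the codomain with the Koszul model $(Q\otimes\wedge z,\ dz=e)$ of the sphere bundle. The only divergence is the final comparison step: instead of your square through the projection onto $P/I$, which only commutes up to a chain homotopy and would still need to be rectified into an equivalence of CDGA arrows, the paper uses the inclusion $Q\hookrightarrow P\oplus_{\varphi^{!}}ss^{-n}\#Q$, a quasi-isomorphism making the comparison square commute strictly --- your own map $\psi$ restricted along this inclusion yields exactly that strict square, so this is a presentational shortcut rather than a different argument.
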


\begin{proof}
Assume that $\xi$ is of rank $2k$ and that the base is a Poincar\'e
duality space in dimension $n-2k$. Recall from \cite{LaSt:PD} or Definition
\ref{D:PDCDGA} the notion of a Poincar\'e duality CDGA.
Let $Q$ be a Poincar\'e duality CDGA model of the base
(which, by \cite{LaSt:PD} or Proposition \ref{P:constrPDCDGA}, exists and can be explictely constructed out of any CDGA model
of the base)
 and let $e\in Q^{2k} \cap \ker d$
 be a representative of the Euler class of the bundle.   Then a CDGA model of the sphere bundle is given by
\begin{equation}
\label{E:QLz}
Q \rightarrowtail (Q \otimes \wedge z, dz = e)
\end{equation}
with $\deg (z) = 2k-1$, and this is also a model of the pair $(D\xi,S\xi)$.

Denote by $\bar z$ a generator of degree $2k$ and define the CDGA
\[
P := \left(\frac {Q\otimes \wedge \bar z} {\left(\bar z^2-e\bar z\right)}, D\bar z=0\right)
\]
which is a Poincar\'e duality CDGA in dimension $n$ (where $\left(\bar z^2-e\bar
  z\right)$ is the ideal in $Q\otimes\wedge \bar z$ generated by this
difference).
As vector spaces we have $P\cong Q\oplus\bar z\cdot Q$.
   Define the CDGA morphism
\[
\varphi : P \longrightarrow Q
\]
by $\varphi (q_1+q_2\bar z)=q_1+e\cdot q_2$, for $q_1$, $q_2 \in Q$.  

We will show that the pretty model associated to $\varphi$ is equivalent to the CDGA morphism (\ref{E:QLz}), which will establish the proposition.   Consider the following diagram
\[
\xymatrix{
s^{-n}\#Q\ar[rr]^{s^{-n}\#\varphi}&&s^{-n}\#P\ar[rr]^{\theta^{-1}_P\,\cong}&&P\\
s^{-2k}Q\ar[u]^{\cong}_{s^{-2k}\theta_Q}\ar[urrrr]_{\Phi^!}
}
\]
where $\theta_Q$ and $\theta_P$ are the Poincar\'e duality
isomorphisms, 
and we set $\varphi^! := \theta^{-1}_P\circ(s^{-n} \# \varphi)$ and $\Phi^! := \varphi^! \circ(s^{-2k} \theta_Q)$.  We prove that we can assume that $\Phi^!$ is given by
\begin{equation}
\label{E:Phi!}
\Phi^! (s^{-2k} q) = q \cdot \bar z.
\end{equation}
Indeed $\Phi^! (s^{-2k}1)=\alpha+\lambda \bar z$ for some $\alpha \in
Q^{2k}$ and $\lambda \in Q^0= \BQ$.   Since $\Phi^!$ is a morphism of
$P$-dgmodules, $\bar z \cdot \Phi^! (s^{-2k}1) = \Phi^! (\bar z \cdot
s^{-2k}1)$ which implies, using that $\bar z \cdot
s^{-2k}1=e\cdot
s^{-2k}1$,
\[
\alpha \bar z + \lambda \bar z^2 = e\alpha + \lambda e \bar z,
\]
and therefore, since $\bar z^2=e\bar z$, $\alpha=0$.  Also $\lambda \neq 0$ because $\Phi^!$ induces an isomorphism in $\Ho^n(-)$, since $s^{-n}\#\varphi$ does.  We can replace the isomorphism $\theta_P$ by $\lambda \cdot \theta_P$ and we get
\[
\Phi^! (s^{-2k}1)=\bar z,
\]
which implies (\ref{E:Phi!}).

A direct computation shows  that $\varphi{\Phi^!}$ is balanced, and hence also $\varphi\varphi^!$.  The pretty model associated to $\varphi$ is isomorphic to
\[
\varphi \oplus \id : P \oplus_{\Phi^!} ss^{-2k} Q \longrightarrow Q \oplus_{\varphi{\Phi^!}} ss^{-2k} Q.
\]
The codomain of $\varphi \oplus \id$ is isomorphic to $(Q\otimes \wedge z, dz=e)$ because $\varphi{\Phi^!} (s^{-2k}1)=e$.
The inclusion of $Q$ in the domain of $\varphi \oplus \id$, 
\[
Q \hookrightarrow P \hookrightarrow P \oplus_{\Phi^!} ss^{-2k} Q,
\]
is clearly a quasi-isomorphism.  Thus $\varphi \oplus \id$ is
weakly equivalent to (\ref{E:QLz}), which is a model of $(D\xi,S\xi)$, and the proposition is proved.
\end{proof}

\section{Poincar\'e duality CDGA's}
\label{S:PDCDGA}
In this section we recall that any $1$-connected  closed manifold
admits a Poincar\'e duality CDGA-model, and we prove some relative
version of that result (Proposition \ref{DPADGC_alg_surjective}.)
This will be used in the next section to build more pretty models.

Any $1$-connected closed $n$-dimensional manifold, $M$, satisfies Poincar\'e 
duality in degree $n$, which means that there is an isomorphism
\[\Ho^*(M;\BQ)\cong\#\Ho^{n-*}(M;\BQ)\] of $\Ho^*(M;\BQ)$-modules.
In \cite{LaSt:PD} we proved that Poincar\'e duality holds not only in
cohomology but also  on some
CDGA model of $M$. To make this precise, we review the following 
\begin{de}
\label{D:PDCDGA}
An  \emph{oriented Poincar\'e duality  CDGA in dimension $n$}, or PDCDGA, is a connected CDGA of finite type, $P$,  equipped with an isomorphism of $P$-dgmodules
\begin{equation}
\label{E:thetaP}
\theta_P\colon P\stackrel{\cong}{\to}\sn P.
\end{equation}
\end{de}
\begin{rem}
Since $P$ is a free $P$-dgmodule generated by a single element, the
isomorphism  $\theta_P$ of (\ref{E:thetaP}) is unique up to a
multiplication by a non-zero scalar.
When this isomorphism is not specified, we talk of a \emph{Poincar\'e
  duality CDGA} (dropping the adjective \emph{oriented}.) 
\end{rem}
\begin{rem}
It is easy to check that Definition \ref{D:PDCDGA} is equivalent to
{\cite[Definition 2.2]{LaSt:PD}}. Indeed the orientation
$\epsilon\colon P\to s^{-n}\BQ$
of {\cite[Definition 2.2-2.3]{LaSt:PD}} is obtained by $\epsilon:=\theta_P(1)$, where
$1\in\BQ$. Conversely, the isomorphism $\theta_P$  is obtained from
the orientation $\epsilon$ 
by  $(\theta_P(y))(x):=\epsilon(x.y)$.\\
\end{rem}

The main result of \cite{LaSt:PD} is that any CDGA whose cohomology
is $1$-connected and satisfies Poincar\'e duality is weakly equivalent 
to some Poincar\'e duality  CDGA.
The aim of this section is  to prove the
follo\-wing   relative version of that result:
\begin{prop}
\label{DPADGC_alg_surjective}
Let $\psi \colon A\to B$ be a morphism of CDGA such that
$\Ho(A)$ \ is  \ a  \ $1$-connected  \ Poincar\'e  \  duality  \ algebra  \ in  \ dimension  \ $n$,
$\Ho^{\geq \frac n2-1}(B)=0$, $\Ho(B)$ is $1$-connected and of finite type,
and $\Ho^2(\psi)$ is surjective.\\
Then $\psi$ is equivalent to some surjective CDGA morphism
\[\varphi\colon P \twoheadrightarrow Q\]
such that $P$ is a $1$-connected Poincar\'e duality CDGA in
dimension $n$, $Q$ is 1-connected, and $Q^{\geq \frac n2-1}=0$. Moreover the morphism
$\varphi$ can be constructed explicitely out of the morphism $\psi$.   Also, if $B$ is 1-connected and $B^{\geq \frac n2 -1}=0$ then we can take $Q=B$.
\end{prop}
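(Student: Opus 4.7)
The plan is to adapt the absolute Poincar\'e duality CDGA construction of \cite{LaSt:PD} to the relative setting provided by $\psi$. The key heuristic is a degree mismatch: the target $B$ is (or can be replaced by) a CDGA concentrated in degrees $<n/2-1$, whereas the modifications that turn a CDGA with PD cohomology into a genuine PD CDGA all take place in degrees $\geq n/2-1$. These modifications should therefore descend to a surjection onto $B$ (or its truncation).

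First I would reduce to the situation where $B$ itself satisfies $B^{\geq n/2-1}=0$, which will simultaneously establish the ``moreover'' clause. Since $H^{\geq n/2-1}(B)=0$, one can build a CDGA $B'$ concentrated in degrees $<n/2-1$ together with a quasi-isomorphism $B' \to B$: for instance, form a minimal Sullivan model of $B$ and kill all generators of degree $\geq n/2-1$, or realize a suitable CDGA-level truncation as a sub-CDGA of $B$. Using the lifting property of a Sullivan replacement of $A$, produce a CDGA morphism $A \to B'$ equivalent to $\psi$, and then apply the trivial-cofibration/fibration factorization in the Quillen model structure on CDGA to replace it by an equivalent surjection $A' \twoheadrightarrow B'$. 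After renaming, we may assume $\psi\colon A \twoheadrightarrow B$ is surjective with $B^{\geq n/2-1}=0$.

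Next I would run the construction of \cite{LaSt:PD} on $A$ while carrying along the surjection onto $B$. That construction proceeds by first truncating $A$ at degree $n$, then fixing an orientation on the truncation, and finally quotienting by the ideal $I$ consisting of elements whose Poincar\'e pairing with everything vanishes. The truncation step is compatible with $\psi$ because $B^{>n}=0$. The crucial point is that, by the very logic of Poincar\'e duality in dimension $n$, the ideal $I$ can be arranged to live in degrees $\geq n/2-1$: an element in degree $<n/2-1$ is paired with elements of degree $>n/2$, which are not removed in the construction, so it cannot lie in $I$. Since $B^{\geq n/2-1}=0$, the ideal $I$ is automatically sent to zero in $B$, and the quotient $P := \tau^{\leq n}(A)/I$ comes equipped with a surjection $\varphi\colon P \twoheadrightarrow Q := B$ of the desired form.

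The main obstacle I anticipate is controlling the exact degrees involved at \emph{every} intermediate step of the construction in \cite{LaSt:PD}, not only at the final quotient: if the construction is iterative (successive approximations to a PD model), I must track that each intermediate modification stays in degrees $\geq n/2-1$ so that surjectivity onto $B$ is preserved at every stage. A related technical point is that the truncation $\tau^{\leq n}$ must be realized as a genuine CDGA surjection rather than only a quasi-isomorphism with a sub-CDGA; this may force me to pass first to a Sullivan model of $A$ whose generators can be truncated cleanly, and then to refactor the resulting morphism as a trivial cofibration followed by a surjection before proceeding.
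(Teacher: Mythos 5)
Your overall strategy is the same as the paper's (replace $B$ by a quasi-isomorphic quotient $Q$ with $Q^{\geq n/2-1}=0$, build the Poincar\'e duality model of $A$ by the method of \cite{LaSt:PD} while controlling degrees so that the map to $Q$ factors through it), but the central step is left as an ``anticipated obstacle'' rather than proved, and the heuristic you offer for it is wrong. The construction of \cite{LaSt:PD} is not ``truncate, orient, and quotient by the orphan ideal'': its main content is an iterative sequence of acyclic Sullivan extensions $A\rightarrowtail \hat A=A\otimes\wedge V$ designed to make the orphan ideal $\calO(\hat A,\hat\epsilon)$ acyclic, and only then is $P=\hat A/\hat\calO$ a Poincar\'e duality CDGA quasi-isomorphic to $A$. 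For your factorization argument you need the composite $A\to\hat A\to \hat A/\hat\calO$ to have no kernel in degrees $<\frac n2-1$, i.e.\ you need both that the extensions add no generators below $\frac n2-1$ and that $\hat\calO$ vanishes below $\frac n2-1$. Your justification that ``by the very logic of Poincar\'e duality'' no element of degree $<\frac n2-1$ can be an orphan conflates nondegeneracy of the pairing on $\Ho(A)$ with orphan-freeness at the cochain level: a non-cocycle (or a coboundary) of low degree can perfectly well pair trivially, under $\epsilon$, with everything, so low-degree orphans do occur and cannot be wished away.

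This is exactly the gap the paper fills with Proposition \ref{P:constrPDCDGA}, proved by alternating Lemma \ref{L:acyclicorphans} (the extensions of \cite{LaSt:PD} add no generators and no \emph{new} orphans below $\frac n2-1$, and make the orphan ideal acyclic) with Lemma \ref{L:noorphans<=p}, which is the genuinely new ingredient you are missing: to remove an existing orphan $x$ of low degree $p<\frac n2-1$ one adjoins contractible pairs of generators $u_i,\bar u_i$ in the \emph{high} degrees $n-p-1$, $n-p$ and extends the orientation so that $\hat\epsilon(\bar u_i\cdot x_j)=\delta_{ij}$, after which the old low-degree orphans are no longer orphans while no low-degree generators have been added. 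Without this device (or an argument replacing it), your claim that all modifications ``take place in degrees $\geq n/2-1$'' is unsubstantiated, and the surjection onto $Q$ need not descend to the quotient by the orphan ideal. Two smaller points: the case $n\leq 6$ (where Proposition \ref{P:constrPDCDGA} does not apply but $\Ho(B)=\BQ$ forces $Q=\BQ$) needs the separate easy argument given in the paper, and Proposition \ref{P:constrPDCDGA} also needs $A$ arranged so that $A^2\subset\ker d$, which your reduction should mention.
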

A key ingredient to prove this proposition is the following:
\begin{prop}
\label{P:constrPDCDGA}
Let $A$ be a CDGA such that 
$\Ho(A)$ is a  Poincar\'e duality algebra in dimension $n$.
Assume moreover that $n\geq7$, $A$ is $1$-connected of finite type, 
and $A^2\subset\ker d$.\\
Then there exists a CDGA quasi-isomorphism
\[\lambda\colon A\stackrel {\simeq}{\longrightarrow } P\]
such that $P$ is a   Poincar\'e duality CDGA in
dimension $n$ and $\lambda$ is an isomorphism in degrees $<\frac n2-1$.
\end{prop}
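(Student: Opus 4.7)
The plan is to adapt the construction of a Poincar\'e duality CDGA-model from the main theorem of \cite{LaSt:PD}, keeping careful control of which degrees get modified. First I would reduce to the case $A^{>n}=0$: since $H^{>n}(A)=0$ by cohomological Poincar\'e duality, one can find an acyclic differential ideal $J\subset A$ concentrated in degrees $\geq n$ such that the quotient $\bar A:=A/J$ satisfies $\bar A^{>n}=0$ and the projection $A\twoheadrightarrow\bar A$ is a CDGA quasi-isomorphism that is the identity in degrees $<n$; this automatically preserves everything in the relevant range $<\frac{n}{2}-1$.

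Next I would perform the Poincar\'e-duality modification on $\bar A$. Pick an orientation $\epsilon\colon\bar A\to s^{-n}\mathbb{Q}$ representing the top cohomology class and consider the $\bar A$-dgmodule morphism $\theta\colon\bar A\to s^{-n}\#\bar A$ defined by $\theta(y)(x)=\epsilon(xy)$, which is a quasi-isomorphism by cohomological Poincar\'e duality. Following \cite{LaSt:PD}, one modifies $\bar A$ by quotienting out the radical $K:=\ker\theta$---a differential ideal of $\bar A$, acyclic since both $\ker\theta$ and $\operatorname{coker}\theta$ are acyclic---possibly with further adjustments so that the descended pairing is an iso onto $s^{-n}\#P$. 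This yields a Poincar\'e duality CDGA $P$ of dimension $n$ together with a CDGA quasi-iso $\bar A\twoheadrightarrow P$; composing gives $\lambda\colon A\twoheadrightarrow\bar A\twoheadrightarrow P$.

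It then remains to verify that $\lambda^k$ is an isomorphism for $k<\frac{n}{2}-1$. By the first step it suffices to show $K^k=0$ in that range. Given $y\in\bar A^k$ with $\epsilon(xy)=0$ for all $x\in\bar A^{n-k}$, cohomological Poincar\'e duality first forces $[y]=0$ in $H^k(\bar A)$, so $y=dz$ for some $z\in\bar A^{k-1}$. Using $\epsilon(x\cdot dz)=\pm\epsilon(dx\cdot z)$, the vanishing condition translates into a pairing condition on $z$ against $dA^{n-k}\subset A^{n-k+1}$, which together with the hypothesis $A^2\subset\ker d$, $1$-connectivity, and the bound $n\geq 7$ allows one to conclude $y=0$ by induction on $k$ starting from the trivial case $k=0$ (where $K^0=0$ is immediate from $\epsilon$ being nonzero on $A^n$).

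The main obstacle will be this last step: promoting the cohomological vanishing $[y]=0$ to the chain-level vanishing $y=0$ in low degrees. The cohomological argument alone is insufficient, and one must exploit the structural hypothesis $A^2\subset\ker d$---which forces the available coboundary structure in low degrees to be very restricted---together with the inductive book-keeping up from degree $0$, to rule out nonzero coboundary obstructions throughout the range $0\leq k<\frac{n}{2}-1$.
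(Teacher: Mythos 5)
Your opening truncation to $A^{>n}=0$ is harmless, but the two claims that carry your argument are exactly the points where the real work lies, and neither holds as stated. First, the acyclicity of $K=\ker\theta$ (the ideal of orphans of $(\bar A,\epsilon)$) is not a formal consequence of $\theta$ being a quasi-isomorphism: for a map that is neither injective nor surjective, a quasi-isomorphism can have non-acyclic kernel and cokernel, and indeed the whole point of \cite{LaSt:PD} is that the orphan ideal of an oriented CDGA need \emph{not} be acyclic; one must enlarge $A$ by acyclic oriented Sullivan extensions (the $k$-half-acyclicity machinery of \cite{LaSt:PD}, packaged here as Lemma \ref{L:acyclicorphans}) before dividing by orphans. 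Your ``possibly with further adjustments'' hides precisely this construction. Second, and fatal for the new part of the statement, your claim that the hypotheses force $K^k=0$ for $k<\frac n2-1$ is false: orphans need not be cocycles (so the reduction ``$[y]=0$, hence $y=dz$'' already does not apply to all of $K^k$), and low-degree orphans genuinely occur. For example, let $P$ be the Poincar\'e duality CDGA $\Ho^*(S^2\times S^8;\BQ)$ with zero differential ($n=10$), let $M$ be the acyclic complex spanned by $x$ in degree $3$ and $dx$ in degree $4$, made into a $P$-dgmodule with $P^+\cdot M=0$, and let $A=P\oplus M$ be the square-zero extension ($M\cdot M=0$). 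Then $A$ satisfies every hypothesis of Proposition \ref{P:constrPDCDGA}, yet $x$ is an orphan of degree $3<\frac n2-1$, so no induction starting from degree $0$ can conclude $y=0$, and the projection $A\to A/\calO$, although a quasi-isomorphism here, is not injective in degree $3$.

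The mechanism the proof actually needs is not to show that low-degree orphans vanish, but to make them stop being orphans without touching low degrees: Lemma \ref{L:noorphans<=p} adjoins contractible generators $u_i,\bar u_i$ in the \emph{high} degrees $n-p-1$ and $n-p$ and extends the orientation so that $\hat\epsilon(\bar u_i\cdot x_j)=\delta_{ij}$, giving each degree-$p$ orphan a dual partner while adding no generators below $\frac n2$; alternating this with Lemma \ref{L:acyclicorphans}, which restores acyclicity of the orphan ideal while adding no generators and no orphans below $\frac n2-1$, one only at the end divides by the orphan ideal of the enlarged CDGA $\hat A$, and the composite $A\rightarrowtail\hat A\twoheadrightarrow\hat A/\calO(\hat A,\hat\epsilon)$ is the desired $\lambda$. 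Without these extensions, your map $A\to\bar A/K$ need be neither a quasi-isomorphism nor an isomorphism in degrees $<\frac n2-1$, so the proposal as written does not prove the proposition.
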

This   proposition is an improvement of the main result of
\cite{LaSt:PD} in the sense that the quasi-isomorphism $\rho$ to the
Poincar\'e duality CDGA is an isomorphism below about half the
dimension.

If we take for granted Proposition \ref{P:constrPDCDGA}  then we can prove
Proposition \ref{DPADGC_alg_surjective} as follows.
\begin{proof}[Proof of Proposition \ref{DPADGC_alg_surjective}]
If $n\leq6$ then, since $\Ho^{\geq \frac n2-1}(B)=0$ and $\Ho(B)$ is
$1$-connected, we have $\Ho(B)=\BQ$ and the proposition is a consequence
of the main result of \cite{LaSt:PD} by taking $Q=\BQ$.

Assume now that $n\geq7$.
By passing to Sullivan models it is easy to see that $\psi$ is
equivalent to a surjective morphism between $1$-connected finite type
CDGA's. Thus without loss of generality we assume that $\psi\colon
A\to B$ is
already like that. Moreover, since $\Ho^{\geq \frac n2-1}(B)=0$, by moding
out $B$ by a suitable  acyclic ideal we get a surjective
quasi-isomorphism 
\[\pi\colon B\stackrel {\simeq}{\twoheadrightarrow } Q\]
where  $Q^{\geq \frac n2-1}=0$.

By Proposition \ref{P:constrPDCDGA}, there is a quasi-isomorphism 
$\lambda\colon A\stackrel {\simeq}{\twoheadrightarrow} P$ which is an
isomorphism in degrees $< n/2-1$ and such that $P$ is a Poincar\'e
duality CDGA. Since $(\ker\lambda)^{<\frac n2-1}=0$ and $Q^{\geq \frac n2-1}=0$, the
morphism $\pi\psi$ extends along $\lambda$ into the desired morphism
$\varphi\colon P \twoheadrightarrow Q$.
\end{proof}

The rest of this section is devoted to the proof of Proposition
\ref{P:constrPDCDGA}. Since the techniques used here will not appear
in the rest of this paper, the reader can safely jump to the next
section if he wishes.     The proof  is based on techniques of \cite{LaSt:PD} and we
assume that the reader is familiar with the notation and proofs of
that paper. In particular, recall that an \emph{orientation} (in
degree $n$) of a CDGA
$(A,d_A)$ is a chain map
\[\epsilon\colon A\to s^{-n}\BQ\]
that is surjective in cohomology \cite[Definition 2.3]{LaSt:PD}.  We then say that $(A,d,\epsilon)$ is an \emph{oriented CDGA}.   Note that $\epsilon$ is a chain map if and only if $\epsilon(d(A^{n-1}))=0$.   
The differential ideal of \emph{orphans} of this oriented CDGA is (\cite[Definition 3.1 and  Proposition 3.2]{LaSt:PD})
\[\calO=\calO(A,\epsilon):=\{a \in A\textrm{ such that }\forall b\in
A:\epsilon(a\cdot b)=0\}.\]
The main interest of the notion of orphans is that when $A$ is 1-connected of finite type and $\Ho(A)$ is a Poincar\'e duality algebra in degree $n$ then $P=A/\calO$ is  
a Poincar\'e duality CDGA in degree $n$ \cite[Proposition
3.3]{LaSt:PD}. Moreover, P is quasi-isomorphic to $A$ when $\calO$ is
acyclic.     The strategy of the proof of Proposition \ref{P:constrPDCDGA} is to build a Sullivan extension $(A,d) \rightarrowtail (\hat A := A \otimes \wedge V,\hat d)$ such that $V^{< \frac n2 -1}=0$ and its ideal of orphans $\hat \calO$ is acyclic and without elements of degree $<\frac n2 -1$.  Then $\lambda : A \longrightarrow  {\hat A}/{\hat\calO}$ will be the desired quasi-isomorphism to a PDCDGA.
For the sake of the proof we need the following definition:
\begin{de}
Let $(A,d,\epsilon)$ be an oriented CDGA.
\begin{itemize}
\item[(i)] 
The oriented CDGA $(A,\epsilon)$ has \emph{no
  orphans in degrees $\leq p$} if $\left(\calO(A,\epsilon)\right)^i=0$ for
$i\leq p$.
\item[(ii)]
 An \emph{acyclic oriented
  Sullivan extension} is a Sullivan extension
\[\xymatrix{(A,d)\ar@{ >->}[r]^-{\simeq}&(\hat A:=A\otimes\wedge V,\hat d)}\]
that is a quasi-isomorphism and is equipped with an orientation
$\hat\epsilon\colon\hat A\to s^{-n}\BQ$ that extends $\epsilon \colon A \to s^{-n} \BQ$.\\
\item[(iii)]
The acyclic oriented Sullivan extension (ii) \emph{adds no
  orphans in degree $\leq q$} if
\[\left(\calO(\hat A,\hat \epsilon)\right)^i\subset \left(\calO( A,
  \epsilon)\right)^i\quad\textrm{ for }i\leq q.\]
\item[(iv)] The acyclic oriented Sullivan extension $(ii)$ \emph{adds no
  generators in degree $\leq m$} if
\[V^i=0\textrm{ for }i\leq m.\]
\end{itemize}
\end{de}

Proposition \ref{P:constrPDCDGA}  will be a consequence of an
inductive application of the following two lemmas.
\begin{lem}\label{L:acyclicorphans}
Let $(A,d,\epsilon)$ be an oriented CDGA of finite type that is
$1$-connected, such that $A^2\subset\ker(d)$ and $\Ho(A,d)$ is a
Poincar\'e duality algebra in degree $n\geq7$.\\
Then $(A,d,\epsilon)$ admits an acyclic oriented Sullivan extension
that adds no orphans in degree $\leq\frac n2-1$, adds no generators in
degrees $<\frac n2-1$, and whose set of orphans is acyclic.
\end{lem}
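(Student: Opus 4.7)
The proof refines the construction of \cite{LaSt:PD}, which for any oriented CDGA $(A,\epsilon)$ with $1$-connected Poincar\'e duality cohomology in dimension $n$ builds an acyclic oriented Sullivan extension $\hat A$ whose orphan ideal $\calO(\hat A)$ is acyclic, by inductively adjoining generators that kill classes in $H^*(\calO)$ while extending $\epsilon$ so that each new generator is itself an orphan. The additional requirement here is degree control: no generators in degrees below $N:=\lceil n/2-1\rceil$, and no new orphans in degrees $\leq N$. The plan is to carry out the construction in two coordinated phases --- first dealing with low-degree orphans of $A$ itself, then applying the cohomology-killing machinery of \cite{LaSt:PD} restricted to generators of degree $\geq N$.

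First I would analyze $\calO(A,\epsilon)$ in low degrees using the hypotheses $A^0=\BQ$, $A^1=0$, $A^2\subset\ker d$, together with the PD pairing on $H(A)$: any cocycle orphan of degree $\leq n-1$ is cohomologically trivial, and the low-degree primitive conditions force cocycle orphans to vanish in degrees $\leq 3$. In the remaining range $4\leq i\leq N$, this analysis identifies precisely which nonzero classes in $H^i(\calO(A,\epsilon))$ and which noncohomological orphans in degree $i$ need to be handled by the extension.

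Next I would perform the following two phases. In Phase~1, for each nonzero low-degree orphan $\alpha\in\calO(A)^i$ with $i\leq N$ that cannot be removed within $\calO(A)$ itself, adjoin a generator $v$ of degree $\geq N$ and specify a product $w$ in $\hat A^{n-i-|v|}$ so that $\hat\epsilon(\alpha\cdot v\cdot w)\neq 0$; this forces $\alpha$ out of $\calO(\hat A)$, while the degree restriction on $v$ respects $V^{<N}=0$. In Phase~2, proceed by induction on $k\geq N$: at stage $k$, for each class $[\beta]\in H^{k+1}(\calO(A_k,\epsilon_k))$ adjoin a generator $u$ of degree $k$ with $du=\beta$, and extend $\hat\epsilon$ to vanish on all top-degree monomials involving $u$, which ensures $u$ is an orphan and preserves the chain-map property. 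The colimit $\hat A$ will have $\calO(\hat A)$ acyclic, $V^{<N}=0$, and $\calO(\hat A)^i\subset\calO(A)^i$ for $i\leq N$.

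The principal obstacle is the coordination between Phases~1 and~2: generators introduced in Phase~1 alter the orphan ideal and hence change the cohomology that Phase~2 must subsequently kill, while generators added in Phase~2 may interfere with the pairings carefully set up in Phase~1. A delicate inductive argument is required to ensure that these interactions are consistent, using the PD pairing on $H(A)$, the finite-type assumption, and the low-degree hypotheses $A^1=0$, $A^2\subset\ker d$ to constrain the relevant modules. The condition $n\geq 7$ gives enough room in the middle-degree range $[N,n-N]$ both for Phase~1 to find suitable pairing witnesses $w$ and for Phase~2 to extend $\hat\epsilon$ without creating spurious new low-degree orphans.
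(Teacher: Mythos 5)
There is a genuine gap, and it sits at the heart of your Phase~2. The lemma demands an \emph{acyclic} oriented Sullivan extension, i.e.\ $A\rightarrowtail\hat A$ must be a quasi-isomorphism. Your basic move --- for a class $[\beta]\in \Ho^{k+1}(\calO(A_k,\epsilon_k))$ adjoin a free generator $u$ of degree $k$ with $du=\beta$ and extend the orientation so that $u$ is an orphan --- destroys this. Since $\epsilon$ is surjective in cohomology and $\Ho(A)$ is a Poincar\'e duality algebra, the pairing induced by $\epsilon$ on $\Ho(A)$ is nondegenerate, so every orphan cocycle in the relevant range is exact in $A$: $\beta=d\gamma$ for some $\gamma\in A^{k}$ which is \emph{not} an orphan. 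Adjoining $u$ with $du=\beta$ then creates a new cohomology class $[u-\gamma]$ in degree $k$, so the extension is not a quasi-isomorphism (and Poincar\'e duality of the cohomology is lost for the next stage). Circumventing exactly this is the delicate content of \cite[Section 4 and Proposition 5.1]{LaSt:PD}, which the paper invokes: one adjoins contractible pairs (so acyclicity is automatic) and extends the orientation so that the new generator $w_i$ reproduces the pairing of the pre-existing primitive $\gamma_i$, whence a \emph{correction} of $\gamma_i$ becomes an orphan bounding $\alpha_i$; in particular the new generator $w_i$ is \emph{not} an orphan --- the paper uses precisely this fact ($\hat\epsilon(w_i\,d\xi_i)=\pm\epsilon(\gamma_i\xi_i)\neq0$) to check that no orphans are added in degrees $\leq n/2-1$. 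Your proposal replaces this mechanism by its opposite (declaring the new generator an orphan) and never addresses why the resulting extension is a quasi-isomorphism, so the construction as stated fails.

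Two further structural points. First, you never establish that the orphan cohomology below degree $\approx n/2$ vanishes or gets killed; in the paper this is free, because the orphan ideal of an oriented CDGA with Poincar\'e duality cohomology is automatically $\frac n2$-half-acyclic (\cite[Definition 3.5]{LaSt:PD} and the remark after), which is what allows the induction to start at $k=\lceil\frac n2+1\rceil$ and forces all new generators (the $w_i$, of degree $k-2$) into degrees $\geq\frac n2-1$. Observing that a low-degree orphan cocycle is exact \emph{in $A$} is not enough; what is needed is exactness \emph{inside $\calO$}. Second, your Phase~1 (pairing away the pre-existing low-degree orphans of $A$) is not required by this lemma at all: the statement only asks that the extension \emph{add} no orphans in degrees $\leq\frac n2-1$, and the removal of existing low-degree orphans is the job of Lemma \ref{L:noorphans<=p}, which Proposition \ref{P:constrPDCDGA} alternates with the present lemma. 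The ``coordination between Phases 1 and 2'' that you flag as the principal obstacle is thus partly an artifact of conflating the two lemmas, and partly the genuinely hard point (consistent orientation extensions that do not create or destroy the wrong pairings), which your outline leaves unproved while the paper discharges it by citing \cite[Lemma 4.5 and Proposition 5.1]{LaSt:PD}.
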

\begin{proof}
The set of
orphans of  $(A,d,\epsilon)$ is $\frac n2$-half-acyclic (see
\cite[Definition 3.5]{LaSt:PD} and the remark after).
Since, by hypothesis, $(A,d,\epsilon)$ satisfies  \cite[(4.1)]{LaSt:PD} we can apply \cite[Proposition 5.1]{LaSt:PD} iteratively for all integers $k$ ranging
from $\lceil{\frac n2+1}\rceil$ up to $n+1$. More precisely, at each step we construct the extension
described at \cite[Section 4]{LaSt:PD} for the integer $k\geq \frac n2+1$.
This is an acyclic Sullivan extension defined at the equation
\cite[(4.4)]{LaSt:PD} which is oriented by \cite[Lemma
4.5]{LaSt:PD}. The set of orphans in this extension is $k$-half-acyclic by
\cite[Proposition 5.1]{LaSt:PD}.

The new generators of lowest degrees in the extension \cite[(4.4)]{LaSt:PD} are the $w_i$'s of
degree $k-2\geq \frac n2-1$. Thus the extension adds no generator of
degrees $<\frac n2-1$. 

Since by \cite[(4.1)]{LaSt:PD}, $d\gamma_i=\alpha_i$ and since, by \cite[(4.2)]{LaSt:PD}, $\alpha_i$ is not the boundary of an orphan in $A$, there exists $\xi_i \in A$ such that $\epsilon (\gamma_i \xi_i)\neq 0$.   By \cite[(4.5)(ii)]{LaSt:PD}, $\hat\epsilon (w_i d(\xi_i)) = \pm \epsilon(\gamma_i\xi_i)\neq 0$ and therefore $w_i$ is not an orphan in $\hat A$.  
Thus the extension adds no orphans in degrees $\leq k-2$, hence in 
degrees $\leq \frac n2-1$.

When we reach $k=n+1$, the set of orphans
is $(n+1)$-half acyclic, and therefore is acyclic by \cite[Proposition
3.6]{LaSt:PD}.
\end{proof}

\begin{lem}
\label{L:noorphans<=p}
Let $(A,d,\epsilon)$ be as in Lemma \ref{L:acyclicorphans}. Assume that
its set of orphans is acyclic and that there are no orphans in degrees
$<p$ for some integer $1 \leq p<\frac n2-1$. 
Then $(A,d,\epsilon)$ admits an acyclic oriented Sullivan extension 
with no orphans in degrees $\leq p$
and which adds  no generators in
degrees $\leq\frac n2$.
\end{lem}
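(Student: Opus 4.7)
My plan is to construct the desired acyclic oriented Sullivan extension by adjoining, for each basis orphan in degree $p$, a Koszul pair of new generators placed in sufficiently high degrees to provide pairing partners under a carefully extended orientation.

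First I would observe that because $\calO$ is acyclic and $\calO^{<p}=0$, the differential $d\colon\calO^p\to\calO^{p+1}$ is injective. Picking a basis $\omega_1,\dots,\omega_r$ of $\calO^p$ and setting $\alpha_i:=d\omega_i\in A^{p+1}$, the $\alpha_i$ are linearly independent. For each $i$ I adjoin Sullivan generators $v_i$ of degree $n-p$ and $w_i$ of degree $n-p-1$, with $dv_i=0$ and $dw_i=v_i$. Since $p<\frac n2-1$ we have $n-p-1>\frac n2$, so no generators appear in degree $\leq\frac n2$. The resulting
\[\hat A\,:=\,A\otimes\bigotimes_{i=1}^r\Lambda(v_i,w_i)\]
is a two-stage Sullivan extension, and it is acyclic because each Koszul factor $\Lambda(v_i,w_i)$ is quasi-isomorphic to $\BQ$.

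Next I extend $\epsilon$ to an orientation $\hat\epsilon\colon\hat A\to s^{-n}\BQ$. Choose linear functionals $\mu_i\colon A^{p+1}\to\BQ$ dual to the $\alpha_j$'s (i.e.\ $\mu_i(\alpha_j)=\delta_{ij}$) and set
\[\hat\epsilon(x\,w_i):=\mu_i(x)\text{ for }x\in A^{p+1},\qquad \hat\epsilon(a\,v_i):=(-1)^{p+1}\mu_i(da)\text{ for }a\in A^p,\]
with $\hat\epsilon$ equal to zero on every $\hat A^n$-summand involving two or more of the new generators. A degree count using $p<\frac n2-1$ shows that each multi-generator summand of $\hat A^{n-1}$ requires a factor from $A^i$ with $i<0$ and hence vanishes; the only non-trivial chain-map constraint then reduces to the relation $\hat\epsilon(a v_i)=(-1)^{p+1}\hat\epsilon((da)\,w_i)$ on $A^p\cdot v_i$, which is built into the definition. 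So $\hat\epsilon$ extends $\epsilon$ as a chain map, and it is surjective in cohomology because $\hat A\simeq A$.

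It then remains to check $\hat\calO^{\leq p}=0$. The new generators all lie in degrees $>p$, so $\hat A^{\leq p}=A^{\leq p}$, and any $\hat A$-orphan in those degrees must already be an $A$-orphan; by the hypothesis $\calO^{<p}=0$ such an orphan lies in $\calO^p$. But for every non-zero $\omega=\sum c_j\omega_j\in\calO^p$, choosing an index $i$ with $c_i\neq 0$ yields $\hat\epsilon(\omega\,v_i)=(-1)^{p+1}\mu_i(\sum c_j\alpha_j)=(-1)^{p+1}c_i\neq 0$, so $\omega$ is not an orphan in $\hat A$. The step I expect to require the most care is the verification that $\hat\epsilon$ is indeed a chain map when products of several new generators could potentially contribute; the degree hypothesis $p<\frac n2-1$ is exactly what eliminates those contributions and makes the construction go through cleanly.
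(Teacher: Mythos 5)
Your proof is correct and follows essentially the same route as the paper: adjoin one contractible pair of generators in degrees $n-p-1$ and $n-p$ for each basis element of $\calO^p$, extend the orientation so the new closed generators pair non-trivially with the degree-$p$ orphans, and use $p<\frac n2-1$ to rule out contributions from products of new generators. The only difference is cosmetic: you specify $\hat\epsilon$ via dual functionals $\mu_i$ on $A^{p+1}$ (making the chain-map identity hold by construction), whereas the paper prescribes it using complements $S$ and $T$ of $\calO^p\oplus(A^p\cap\ker d)$ and of $d(\calO^p)\oplus d(S)$; the two prescriptions are equivalent.
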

\begin{proof}
Let $\calO$ be the ideal of orphans in $(A,d,\epsilon)$.
Since $\calO$ is acyclic and $\calO^{<p}=0$, we have
$\calO^p\cap\ker(d)=0$.
Let $\{x_1,\dots,x_r\}$ be a basis of $\calO^p$.
Consider the acyclic Sullivan extension
\[\hat A:=(A\otimes \wedge(u_1,\dots,u_r,\bar u_1,\dots,\bar u_r),\hat
d)\]
with $\deg(u_i)=n-p-1$, $\deg(\bar u_i)=n-p$, $\hat d(u_i)=\bar u_i$
and $\hat d(\bar u_i)=0$.
We extend the orientation $\epsilon$ into an orientation
$\hat\epsilon$ of $\hat A$ as follows.
 Let $S$ be a supplement space  of $\calO^p\oplus(A^p\cap\ker(d))$ in $A^p$.
Let $T$ be a supplement space of $d(\calO^p)\oplus d(S)$ in $A^{p+1}$.
Since $n-p-1>\frac n2$ there is a unique degree $0$ map 
\[\hat\epsilon\colon\hat A\to s^{-n}\BQ\]
extending $\epsilon$ and such that
\[
\begin{cases}
\hat\epsilon(\bar u_i\cdot x_j)&=\delta_{ij}\quad\textrm{where
  $\delta_{ij}$ is the Kronecker symbol}\\
 \hat\epsilon(\bar u_i\cdot \ker(d))&=0\\
 \hat\epsilon(\bar u_i\cdot S)&=0\\
 \hat\epsilon( u_i\cdot d(x_j))&=(-1)^{n-p}\delta_{ij}\\
 \hat\epsilon( u_i\cdot d(S))&=0\\
 \hat\epsilon( u_i\cdot T)&=0.
\end{cases}
\]
Then one computes that $\hat\epsilon(d(\hat{A}^{n-1}))=0$, and hence $\hat\epsilon$ is an
orientation.

This extension adds no generators in degrees $<n-p-1$, and hence no
generators in degrees $\leq \frac n2$ (because $p<\frac n2-1$.) For the same reasons it adds no orphans in degrees $\leq p$. Moreover all the degree $p$ orphans of
$A$, which are linear combinations of 
$x_1,\dots,x_r$, are not orphans anymore in $\hat A$ since
$\hat\epsilon(\bar u_i\cdot x_j)=\delta_{ij}$. Thus $\hat A$ has no orphans in
degree $\leq p$.
\end{proof}

\begin{proof}[Proof of Proposition \ref{P:constrPDCDGA}]
Let $p_{\max}$ be the largest integer $<\frac n2-1$.
Apply   Lemmas \ref{L:acyclicorphans}, \ref{L:noorphans<=p}, and \ref{L:acyclicorphans} again, successively for $p=1,2,\dots,p_{\max}$.
This gives by composition an acyclic oriented Sullivan extension $\hat
A$ with an acyclic ideal of orphans, with no orphans in
degrees
$<\frac n2-1$, and with no generators added in degree $<\frac n2-1$.
Therefore the composite
\[
\xymatrix{\lambda\colon A\,\ar@{>->}[r]^-{\simeq}&\hat A \ar@{->>}[r]^-{\simeq}&
\frac{\hat A}{\calO(\hat A,\hat\epsilon)}}
\]
is a quasi-isomorphism, and an isomorphism in degrees $<\frac n2-1$, to
a Poincar\'e duality CDGA.
\end{proof}

\section{Poincar\'e duality spaces that retract  rationally on their half-skeleton}
%\label{s:p/i}
\label{s:retracthalf}

\bigskip

In this section we consider a quite large class of Poincar\'e duality
pairs $(W,\partial W)$ that admit a pretty model. Indeed the only
restriction is on the boundary $\partial W$ which should retracts
rationally on its half-skeleton as explained in the following definition.
\begin{de}\label{D:retrhalfsk}
Let $M$ be a simply-connected Poincar\'e duality space in dimension $n-1$.    We say that it \emph{retracts rationally on its half-skeleton} if there exists a morphism of connected CDGAs
\[
\rho : Q \longrightarrow A 
\]
such that
\begin{itemize}
\item[(i)] $A$ is a CDGA model of $M$,
\item[(ii)]   $\Ho^{\geq n/2-1}(Q)=0$, and
\item[(iii)]    $\Ho^k (\rho)$ is an isomorphism for $k \leq n/2$.
\end{itemize}
\end{de}

\begin{rem}
The terminology comes from the fact that the conditions of the definition imply that the realization of $\rho$ can be thought of as  a retraction of $M$ on a skeleton of half the dimension, as it is clear from Diagram ($\ref{Diag:pi}$) in the proof of Proposition \ref{p:wdfhalfsk}.   
\end{rem}

\begin{rem}
Poincar\'e duality and (ii)-(iii) in the previous definition imply that $M$ has no cohomology about the middle dimension.   More precisely, if $n$ is even then $\Ho^{n/2-1}(M)=\Ho^{n/2}(M)=0$, and if $n$ is odd then $\Ho^{(n-1)/2}(M)=0$.
\end{rem}

\begin{exemple}\label{Ex:retract}
\begin{enumerate}
\item Consider the total space $W$ of a $d$-dimensional disk bundle
  over a closed manifold of dimension $<d-1$. Then $\partial W$
  retracts rationally on its half-skeleton as one checks by building a
  model of the sphere bundle.
\item \label{Exitem:complpoly} Let $K$ be a compact polyhedra embedded
  in a closed manifold $M$ of dimension $n$. Assume that $K$ and $M$
  are $1$-connected and $M\geq 2\dim K + 3$. Let $T$ be a regular
  neighborhood of $K$ in $M$. Then $N \defeq \delt$ retracts rationally
  on its half-skeleton. Indeed, \cite[Theorem 1.2]{LaSt:PE} gives a
  model of $\delt$ of the form $Q\oplus sD$ where $Q$ is a model of
  $K$ and $D\simeq \sn Q$, and the conclusion follows.
\item As a special case of the previous example consider a $1$-connected polyhedron $K$ embedded in $S^n=\mathbb{R}^n \cup \{\infty\}$ with $n\geq 2 \dim K +3$. Then the boundary of a thickening of $K$ in $S^n$ retracts rationally on its half-skeleton.
\end{enumerate}
\end{exemple}

\begin{prop}\label{p:wdfhalfsk}
Let $M$ be a simply-connected Poincar\'e duality space in dimension $n-1$.    Then it retracts rationally on its half-skeleton if and only if there exists a connected CDGA, $Q$, such that
\begin{itemize}
\item[(i)]   $Q^{\geq n/2-1} = 0$, and
\item[(ii)]    $Q\oplus ss^{-n}\#Q$ is a CDGA model of $M$.
\end{itemize}
\end{prop}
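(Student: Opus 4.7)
The direction $(\Leftarrow)$ is essentially immediate: given $Q$ with $Q^{\geq n/2-1}=0$ such that $Q\oplus ss^{-n}\#Q$ models $M$, take $\rho\colon Q\hookrightarrow Q\oplus ss^{-n}\#Q$ the inclusion of the first summand of the semi-trivial mapping cone (a CDGA by Proposition~\ref{p:nagata}, the zero morphism being trivially balanced). Using the conventions $(s^kR)^p=R^{k+p}$ and $(\#Q)^{-p}=(Q^p)^*$, one computes that $ss^{-n}\#Q$ sits in degrees $[n/2+1,n-1]$, so the short exact sequence $0\to Q\to Q\oplus ss^{-n}\#Q\to ss^{-n}\#Q\to 0$ and its cohomology LES yield $H^k(\rho)$ an isomorphism for $k\leq n/2$, verifying Definition~\ref{D:retrhalfsk}.

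The direction $(\Rightarrow)$ has four steps. \emph{Step 1 (Reduction).} Replace $A_0$ by a Poincar\'e duality CDGA $A$ of dimension $n-1$ modelling $M$, using \cite{LaSt:PD} (or Proposition~\ref{P:constrPDCDGA}). Using the acyclic-ideal construction from the proof of Proposition~\ref{DPADGC_alg_surjective} together with standard Sullivan-model liftings, arrange an equivalent CDGA morphism $\bar\rho\colon Q\to A$ with $Q$ connected, $Q^{\geq n/2-1}=0$, and $H^k(\bar\rho)$ iso for $k\leq n/2$. \emph{Step 2 (Retraction).} Factorize $\bar\rho$ as a relative minimal Sullivan extension $Q\hookrightarrow Q\otimes\Lambda V\xrightarrow{\simeq}A$. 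The inductive construction adds no generators in $V^{\leq n/2}$: the cohomological isomorphism obstructs the need to create classes, while the vanishing $H^{\geq n/2-1}(Q)=0$ (from $Q^{\geq n/2-1}=0$) obstructs the need to kill any. Hence $V$ lives in degrees $\geq n/2+1$, and for every $v\in V$ the pure-$Q$ component of $d(v)$ lies in $Q^{|v|+1}\subset Q^{\geq n/2+2}=0$, so $d(V)\subset Q\otimes(\Lambda V)^+$. Consequently the augmentation
\[\sigma\colon Q\otimes\Lambda V\twoheadrightarrow Q,\quad q\otimes w\mapsto q\cdot\varepsilon(w),\]
commutes with $d$ and provides a CDGA retraction of $\bar\rho$, i.e.\ $\sigma\bar\rho=\id_Q$.

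\emph{Step 3 (Comparison).} Using $ss^{-n}\#Q=s^{-(n-1)}\#Q$ and the Poincar\'e duality isomorphism $\theta_A\colon A\xrightarrow{\cong}s^{-(n-1)}\#A$, define the $Q$-dgmodule morphism
\[g\defeq\theta_A^{-1}\circ s^{-(n-1)}\#\sigma\,\colon\,ss^{-n}\#Q\longrightarrow A,\]
and set $h\defeq\bar\rho\oplus g\colon Q\oplus ss^{-n}\#Q\to A$. Product compatibility is automatic except for $(sx)\cdot(sy)$, which vanishes on the source by the semi-trivial structure and on the target because $g$ lands in $A^{\geq n/2+1}$ whose squares live in $A^{\geq n+2}=0$ (as $A$ is $(n-1)$-dimensional). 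Hence $h$ is a CDGA morphism. \emph{Step 4 (Quasi-iso).} For $k\leq n/2$ we have $H^k(ss^{-n}\#Q)=0$, so $H^k(h)=H^k(\bar\rho)$ is iso. For $k\geq n/2+1$ we have $H^k(Q)=0$ and $H^k(g)$ factors as
\[(H^{n-1-k}Q)^*\xrightarrow{\#H(\sigma)}(H^{n-1-k}A)^*\xrightarrow{\theta_A^{-1}}H^k(A),\]
where $\#H(\sigma)$ is iso because $H(\sigma)\circ H(\bar\rho)=\id$ and $H^{n-1-k}(\bar\rho)$ is iso (since $n-1-k\leq n/2-2$), and $\theta_A^{-1}$ is the PD isomorphism. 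Therefore $h$ is a CDGA quasi-isomorphism, proving $Q\oplus ss^{-n}\#Q$ models $M$.

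The main obstacle is \emph{Step 2}: the combination of the cohomological isomorphism below $n/2$ \emph{and} the algebraic vanishing $Q^{\geq n/2-1}=0$ is essential for showing that the relative minimal Sullivan model $V$ starts in degree $n/2+1$ and that the pure-$Q$ component of every $d(v)$ vanishes. The latter is exactly what makes the augmentation $\sigma$ a chain map, which is the linchpin of the whole argument. \emph{Step 1} also harbors a technical subtlety in replacing the zigzag $Q_0\to A_0\simeq A$ by a genuine CDGA morphism $Q\to A$ with $Q^{\geq n/2-1}=0$; this is handled by iterated use of Sullivan lifting together with re-applying the acyclic-ideal trick until $Q$ is truncated as required.
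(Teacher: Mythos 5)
Your $(\Leftarrow)$ direction is fine, and your observation in Step 2 is correct and nice: since $V^{\leq h}=0$ ($h=\lfloor n/2\rfloor$) by minimality and $Q^{\geq n/2-1}=0$, the pure-$Q$ component of each $d(v)$ vanishes for degree reasons, so $Q\otimes\Lambda^{+}V$ is a differential ideal and the projection $\sigma\colon Q\otimes\Lambda V\to Q$ is a strict CDGA retraction (the paper instead produces a retraction $\pi$ only up to homotopy, via the ideal $I=S\oplus A^{>h}$ and the lifting lemma). The gap is in Steps 1 and 3, and it is structural: your comparison map $h=\bar\rho\oplus g$ goes \emph{into} the model of $M$, and its second component $g=\theta_A^{-1}\circ s^{-(n-1)}\#\sigma$ requires simultaneously that $A$ be an honest Poincar\'e duality CDGA (so that $\theta_A$ is an \emph{isomorphism} and $A^{>n-1}=0$, both used in your multiplicativity argument) and that $\sigma$ be defined on $A$ itself (so that $\#\sigma$ lands in $\#A$). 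But $\sigma$ is only constructed on the relative Sullivan model $Q\otimes\Lambda V$, which is essentially never a PDCDGA (it is nonzero above degree $n-1$ whenever $V$ has even generators, and its self-duality is only a dgmodule quasi-isomorphism); conversely, on a PDCDGA model $A$ you have no construction of a strict retraction onto $Q$ — your degree argument lives on the Sullivan model. As written, the composite defining $g$ does not even typecheck: $s^{-(n-1)}\#\sigma$ lands in $s^{-(n-1)}\#(Q\otimes\Lambda V)$, and the dual of the quasi-isomorphism $Q\otimes\Lambda V\to A$ points the wrong way to transport it to $s^{-(n-1)}\#A$. Step 1 has the same flavour of problem: to get a strict morphism $\bar\rho\colon Q\to A$ with $A$ a PDCDGA you need to lift along quasi-isomorphisms, which requires a cofibrant (Sullivan) source, while the truncation $Q^{\geq n/2-1}=0$ destroys cofibrancy; ``iterated lifting plus the acyclic-ideal trick'' does not resolve this, since a lifted map out of a Sullivan model need not kill the acyclic ideal you then want to quotient by.

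The repair is essentially to reverse the direction, which is what the paper does: keep $A:=Q\otimes\Lambda V$ (no PDCDGA replacement needed), use only a quasi-isomorphism of $A$-dgmodules $\theta\colon A\stackrel{\simeq}{\to}ss^{-n}\#A$ (available because $\Ho(A)$ is a Poincar\'e duality algebra), dualize the \emph{inclusion} $\rho\colon Q\to A$ rather than the retraction, and check that $\epsilon=(\sigma,(ss^{-n}\#\rho)\circ\theta)\colon A\to Q\oplus ss^{-n}\#Q$ is a CDGA quasi-isomorphism. Multiplicativity then cannot be argued via ``$A^{\geq n+2}=0$'' (false for $Q\otimes\Lambda V$); instead one argues as in the paper: elements of degree $\leq h$ of $A$ come from $Q$ and $\epsilon$ is $Q$-linear, while products of two elements of degree $\geq h+1$ map to zero because the target vanishes above degree $n-1$ and suspended elements multiply trivially in the semi-trivial structure. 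With that reversal, your $\sigma$ can indeed replace the paper's $\pi$ and slightly simplify the argument; in its present form, however, the proposal does not go through.
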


\begin{proof}
It is clear that (i) and (ii) imply that $M$ retract rationally on its half-skeleton.
\\
Let us prove the converse. Let
\[
\rho' : Q' \longrightarrow A'
\]
be a morphism between connected CDGAs that satisfies (i)-(iii) of Definition \ref{D:retrhalfsk} (with the added decoration ``prime'').   Consider a minimal Sullivan extension $\hat \rho$:
\[
 \xymatrix{Q'\ar@{ >->}[r]^-{\hat \rho}&(Q'\otimes\wedge V,D')} \stackrel{\simeq}{\longrightarrow} A'
 \]
that factors $\rho'$.    Let $h$ be the integer such that $n=2h$ or $n = 2h+1$.   Since $\Ho^{\leq h}(\rho)$ is an isomorphism and $\Ho^{>h}(Q')=0$, by minimality we have   $V^{\leq h}=0$.     Since $Q'$ is connected and $\Ho^{\geq n/2-1}(Q')=0$, there exists an acyclic ideal $J\subset Q'$ such that $Q^{'\geq n/2-1}\subset J$.     Set $Q := Q'/J$ and consider the push-out of CDGAs
\[\xymatrix{Q' \ar@{ >->}[r]^-{\rho'} \ar[d]_{\simeq} & (Q' \otimes \Lambda V, D')\ar[d]^{\simeq} 
\\
Q \ar@{ >->}_-{\rho}[r] 
\ar@{}[ur]|{pushout}
& (Q \otimes \Lambda V, D)}
\]
and set $A := (Q \otimes \wedge V, D)$.   Note that $\rho$ is an isomorphism in degrees $\leq h$.  Also it endows $A$ with the structure of a $Q$-dgmodule.
\\
Let $S$ be a supplement of $A^h \cap \ker d$ in $A^h$ and set
\[
I := S \oplus A^{>h},
\]
which is an ideal since $A$ is connected.  For degree reasons and since $\Ho^{\leq h}(\rho)$ is an isomorphism, the composite 
\[
Q \stackrel{\rho}{\longrightarrow} A \stackrel{proj}{\longrightarrow} A/I
\]
is a quasi-isomorphism.
\\
By the lifting lemma \cite[Proposition 14.6]{FHT:RHT}, in the following diagram 
\begin{equation} 
\label{Diag:pi}
\xymatrix{Q \ar@{=}[r]\ar@{ >->}_{\rho}[d] & Q\ar[d]_{\simeq} \\
A \ar@{-->}[ur]^{\pi} \ar[r]_{proj} & A/I}
\end{equation}
we get a CDGA morphism $\pi$ that makes the upper left triangle commute and the lower right triangle commute up to homotopy,   in other words, $Q$ is a retract of $A$.

Since $\Ho(A)$ satisfies Poincar\'e duality in dimension $n-1$, there exists a quasi-isomorphism of $A$-dgmodules, hence of $Q$-dgmodules, 
\[
\theta : A \stackrel{\simeq}{\longrightarrow}  ss^{-n} \# A,
\]
and we have the diagram
\[\xymatrix{A \ar[r]^-{\simeq}_-{\theta}&s\sn A\ar[d]^{s\sn \rho} \\
Q\ar[u]^{\rho}& s\sn Q}
\]Set $\lambda = (ss^{-n} \# \rho) \circ \theta$ which is a morphism of $Q$-dgmodules.   Since $\rho$ induces an isomorphism in homology in degrees $\leq n/2$, we get that $ss^{-n} \# \rho$, and hence $\lambda$, induces an isomorphism in homology in degrees $\geq n/2-1$.

Consider the $Q$-dgmodule morphism
\[
\epsilon = (\pi,\lambda) : A \longrightarrow Q \oplus ss^{-n} \# Q.
\]
For degree reasons and since $\pi$ (respectively, $\lambda$) induces isomorphism in homology below (respectively, above) degree $n/2$, we get that $\epsilon$ is a quasi-isomorphism.   

We prove that $\epsilon$ is a morphism of algebras.    Let $a$, $a' \in A$.   If $\deg(a) \leq h$, then, since $\rho$ is an isomorphism in that degree, the multiplication by $a$ is determined by the $Q$-module structure, and since $\epsilon$ is of $Q$-module we get that $\epsilon(a\cdot a')=\epsilon(a)\cdot\epsilon(a')$.   The same arguments work if $\deg(a') \leq h$.  If both $a$ and $a'$ are of degrees $\geq h+1$, then $\deg(a\cdot a')>n$ and then $\epsilon(a\cdot a')=0=\epsilon(a)\cdot \epsilon(a')$, for degree reasons.
\\
Thus $\epsilon$ is a CDGA quasi-isomorphism and the proposition is proved.
\end{proof}

\begin{theo}\label{T:etruscus}
Let $W$ be a simply-connected compact manifold with boundary $\delw$.
Assume that $\partial W$ is 2-connected and retracts rationally on its
half-skeleton. Then $(W,\partial W)$ admits a surjective pretty model.
\end{theo}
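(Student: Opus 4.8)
\subsection*{Proof proposal}

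The plan is to realise $(W,\partial W)$, up to rational homotopy, as the complement of a rationally high-codimension subspace in a closed Poincar\'e duality space, and then to produce the pretty model by applying Proposition \ref{DPADGC_alg_surjective}. Write $n=\dim W$, so that $(W,\partial W)$ is a Poincar\'e duality pair of dimension $n$ and $\partial W$ is a $2$-connected closed $(n-1)$-manifold (for $n\le 2$ the hypotheses are vacuous). Applying Proposition \ref{p:wdfhalfsk} to $\partial W$ gives a connected CDGA $Q$ with $Q^{\geq n/2-1}=0$ together with a CDGA model of $\partial W$ of the form $A_\partial:=Q\oplus ss^{-n}\#Q$, where the semi-trivial attaching map $s^{-n}\#Q\to Q$ vanishes for degree reasons and $Q=Q\oplus 0$ is a sub-CDGA; rationally, $Q$ is a half-skeleton $M_{1/2}$ of $\partial W$, with top nonzero cohomology in degree $\le n/2-2$. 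Now choose a surjective CDGA model $\psi_W\colon A_W\twoheadrightarrow A_\partial$ of the inclusion $\partial W\hookrightarrow W$ (one exists by standard Sullivan theory, cf.\ \cite{FHT:RHT}), and set
\[
\widehat A:=\psi_W^{-1}(Q)=A_W\times_{A_\partial}Q\ \subseteq\ A_W,
\]
a sub-CDGA of $A_W$. Since $\psi_W$ is surjective this pullback is a homotopy pullback, so $\widehat A$ is a model of the homotopy pushout $\widehat M:=W\cup_{\partial W}M_{1/2}$, the ``cap-off'' of $W$ by the half-skeleton of its boundary; the restriction $\widehat\psi\colon\widehat A\twoheadrightarrow Q$ of $\psi_W$ is a surjective CDGA morphism modelling $M_{1/2}\hookrightarrow\widehat M$, and $\ker\widehat\psi=\ker\psi_W$ models the relative cochains of $(W,\partial W)$, so $\Ho(\ker\widehat\psi)\cong\Ho^*(W,\partial W)$.

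The key point is that $\Ho(\widehat A)\cong\Ho^\ast(\widehat M;\BQ)$ is a $1$-connected Poincar\'e duality algebra in dimension $n$: one-connectivity follows from van Kampen applied to the pushout, and the duality is checked from the short exact sequence of complexes $0\to\widehat A\to A_W\to ss^{-n}\#Q\to 0$ (so that $\Ho^k(\widehat M)$ is an extension involving $\Ho^k(W)$ and $\#\Ho^{n-k}(M_{1/2})$) together with Poincar\'e--Lefschetz duality $\Ho^k(W)\cong\#\Ho^{n-k}(W,\partial W)$ and the half-skeleton property of $Q$. Hence Proposition \ref{DPADGC_alg_surjective} applies to $\widehat\psi\colon\widehat A\to Q$: its hypotheses hold, and ``$\Ho^2(\widehat\psi)$ surjective'' holds trivially because $\Ho^2(Q)=\Ho^2(M_{1/2})=\Ho^2(\partial W)=0$ by $2$-connectedness of $\partial W$ --- this is precisely where that hypothesis is used. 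We thus obtain a surjective CDGA morphism $\varphi\colon P\twoheadrightarrow Q$, weakly equivalent to $\widehat\psi$, with $P$ a $1$-connected Poincar\'e duality CDGA in dimension $n$ and $Q^{\geq n/2-1}=0$ (the ``moreover'' part of Proposition \ref{DPADGC_alg_surjective} lets us keep the same $Q$). For degree reasons $\varphi\varphi^!\colon s^{-n}\#Q\to Q$ vanishes --- a nonzero component in degree $p$ would require $Q^p\neq 0\neq Q^{n-p}$, impossible since $Q^{\geq n/2-1}=0$ --- so it is balanced, and by Proposition \ref{p:nagata} the pretty model associated to $\varphi$ (Definition \ref{D:prettymodel}) is defined; it is a surjective pretty model since $\varphi$ is surjective.

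It remains to check that this pretty model
\[
\varphi\oplus\id\colon P\oplus_{\varphi^!}ss^{-n}\#Q\longrightarrow Q\oplus_{\varphi\varphi^!}ss^{-n}\#Q
\]
is a CDGA model of $\partial W\hookrightarrow W$. Its codomain is $Q\oplus ss^{-n}\#Q=A_\partial$, a model of $\partial W$ by Proposition \ref{p:wdfhalfsk}. For its domain, surjectivity of $\varphi$ and Corollary \ref{Cor:P/I} give a quasi-isomorphism $P\oplus_{\varphi^!}ss^{-n}\#Q\xrightarrow{\simeq}P/I$ with $I=\varphi^!(s^{-n}\#Q)$; unwinding the definition of $\varphi^!$ through the Poincar\'e duality isomorphism $\theta_P$ identifies $I$ with the Poincar\'e-dual ideal $(\ker\varphi)^{\perp}=\{y\in P:\theta_P(y)|_{\ker\varphi}=0\}$, so that $P/I\cong s^{-n}\#(\ker\varphi)$ as $P$-dgmodules. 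Since $\ker\varphi\simeq\ker\widehat\psi$ models the relative cochains of $(W,\partial W)$, Poincar\'e--Lefschetz duality yields $\Ho^k(P/I)\cong\#\Ho^{n-k}(W,\partial W)\cong\Ho^k(W)$. Finally one promotes this to a weak equivalence of CDGA \emph{morphisms} by assembling the maps $\widehat A\hookrightarrow A_W$, $\widehat A\simeq P\twoheadrightarrow P/I$, $\psi_W$, and the inclusion $Q\hookrightarrow A_\partial$ into a commuting comparison diagram between $\varphi\oplus\id$ and $\psi_W\colon A_W\to A_\partial$.

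I expect this last step to be the main obstacle: the subtle part is not the cohomological matching but producing an honest weak equivalence of \emph{morphisms}, which forces one to keep track simultaneously of the $P$-dgmodule and CDGA structures on the mapping cones, of the attaching maps, and of the identification $I=\varphi^!(s^{-n}\#Q)\simeq C^\ast(W,\partial W)$. (Alternatively, depending on how \cite[Theorem 1.2]{LaSt:PE} is phrased, one may be able to quote it directly for the inclusion $M_{1/2}\hookrightarrow\widehat M$, whose high-codimension hypothesis $\dim\widehat M=n\geq 2\dim M_{1/2}+3$ holds because $\dim M_{1/2}\leq n/2-2$; but this essentially re-imports the same computation.) A secondary technical point is the verification that $\Ho(\widehat A)$ is genuinely a Poincar\'e duality algebra --- not merely one with the right Betti numbers --- which requires careful bookkeeping with the long exact sequences of the pair $(W,\partial W)$.
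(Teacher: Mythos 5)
Your construction up to the morphism $\varphi\colon P\twoheadrightarrow Q$ is essentially the paper's: the half-skeleton model $Q\oplus ss^{-n}\#Q$ of $\partial W$ from Proposition \ref{p:wdfhalfsk}, a surjective model $\psi$ of the inclusion, the pullback $\widehat A=\psi^{-1}(Q)$ (the paper's $P'$ in \eqref{etruscus:pb}), Poincar\'e duality of $\Ho(\widehat A)$ (the paper checks this by a Wall-type argument with relative duality of $\psi$ and $\iota$, morally your short-exact-sequence bookkeeping), and then Proposition \ref{DPADGC_alg_surjective}. The genuine gap is exactly the step you flag at the end, and it is not a routine "assembly": nothing in your argument identifies the \emph{algebraically} defined attaching map $\varphi^!=\theta_P^{-1}\circ s^{-n}\#\varphi$ with the \emph{topologically} defined connecting map $l\colon \hoker(i)\to P'$ of the homotopy pullback. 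Your computation that $P/I\cong s^{-n}\#(\ker\varphi)$ and that $\Ho(P/I)\cong\Ho(W)$ only shows the domain of the pretty model has the correct cohomology; it produces no zig-zag of quasi-isomorphisms between the morphism $\varphi\oplus\id$ and $\psi$, and a priori a different attaching map could yield the same Betti numbers but a non-equivalent pair. The paper closes precisely this gap by observing that both $\varphi^!\circ\gamma$ and $\lambda\circ l\circ\gamma'$ are shriek maps (isomorphisms on $\Ho^n$, using $\Ho^n(W)=0$), invoking the uniqueness of shriek maps up to scalar and homotopy \cite[Proposition 5.6]{LaSt:PE} to get the homotopy-commutative square \eqref{d:mapco}, and then comparing mapping cones: $P'\oplus_l s\,\hoker(i)\simeq R$ because \eqref{etruscus:pb} is a homotopy pullback, so the cone over $\varphi^!$ together with its map to $Q\oplus ss^{-n}\#Q$ is weakly equivalent, as a morphism, to $\psi$.

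Your proposed fallback of quoting \cite[Theorem 1.2]{LaSt:PE} for $M_{1/2}\hookrightarrow\widehat M$ does not repair this: that theorem concerns an honest subpolyhedron of high codimension in an honest closed manifold, whereas $\widehat M$ is only a homotopy pushout and $M_{1/2}$ only a rational half-skeleton (a CDGA $Q$), so its hypotheses are not available. What \emph{is} available, and what the paper imports from that reference, is the purely algebraic uniqueness statement for shriek maps; if you add that ingredient and run the mapping-cone comparison over $P'$, your argument becomes the paper's proof.
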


\begin{proof}
Since $\delw$ retracts rationally on its half-skeleton, by Proposition
\ref{p:wdfhalfsk} there exists a connected CDGA, $Q$, of finite type such that $Q^{\geq n/2-1}=0$ and   $\apl(\delw) \simeq Q\oplus s\sn Q$.  Since $\delw$ is 2-connected, we can assume that $Q$ is 2-connected.  Then there exists a 1-connected CDGA model  $R$ of $\apl(W)$ and a surjective morphism 
\[\psi \colon R\twoheadrightarrow Q\oplus s\sn Q\]
that is a model of $\apl(W) \to \apl(\delw)$.

Consider the following pullback diagram in CDGA 
\begin{equation}
\label{etruscus:pb}
\xymatrix{ P' \ar[r]^-{\bar\psi} \ar[d]^{i}& Q\ar@{^{(}->}[d]^{\iota} \\
R \ar@{->>}[r]^-{\psi} 
\ar@{}[ur]|{pullback}& Q\oplus s\sn Q, }
\end{equation}
where $\iota$ is the obvious inclusion.  This pullback is a homotopy pullback because $\psi$ is surjective.

We first prove that $P'$ satisfies Poincar\'e duality in cohomology.    We say that a CDGA morphism $\alpha \colon A \to \dela$ satisfies \emph{relative Poincar\'e duality in cohomology} in dimension $n$, if there exists an isomorphism of $\Ho(A)$-modules 
\[\Ho(\hoker \alpha) \cong \sn \Ho(A).
\]
The morphism $\psi$ satisfies relative Poincar\'e duality because it
is a model of the inclusion $\delw \hookrightarrow W$. Moreover
$\hoker (\iota)$ is weakly equivalent to $s^{-n} \#Q$ as a
$Q$-dgmodule, hence    $\iota$ also satisfies relative Poincar\'e
duality in dimension $n$.    Since $\psi$ and $\iota$ satisfy relative
Poincar\'e duality in degree $n$, a  similar argument than the one
used in \cite[Theorem 2.1]{Wal:PC1} implies that $\Ho(P')$ is a Poincar\'e duality algebra in degree $n$. 

By Proposition \ref{DPADGC_alg_surjective} we can factorize $\bar \psi$ in CDGA as follows
\[\xymatrix{P'\ar[rr]^{\bar \psi} \ar[rd]_{\lambda}^{\simeq}&&Q \\
&P.\ar@{->>}[ru]_\varphi& }
 \]
where $\varphi \colon P \twoheadrightarrow Q$ satisfies 
\begin{itemize}
\item[(i)] $P$ is a connected  Poincar\'e duality CDGA in dimension $n$,
\item[(ii)] $\varphi$ is surjective, and
\item[(iii)] $Q^{\geq n/2-1}=0$.
\end{itemize}
 Since \eqref{etruscus:pb} is a homotopy pullback diagram,  $\hoker (i)$ is weakly equivalent as a $P'$-dgmodule to $\hoker (\iota) \simeq \sn Q$.  Therefore, there exists  a cofibrant $P'$-dgmodule $D$ with weak equivalences 
\[\xymatrix{\hoker (i) &\ar[l]_-{\simeq}^-{\gamma'} D \ar[r]^-{\simeq}_-{\gamma} & \sn Q.}\]
 
Set
\[\varphi^! \defeq \theta^{-1}_P \circ \sn \varphi \colon \sn Q \to \sn P\cong P,\]
consider the natural map $l \colon \hoker (i) \to P'$, and recall that $\lambda \colon P' \to P$ is the quasi-isomorphism used to factorize the morphism $\psi$ through $P$. We can verify that $\Ho^n (\varphi^!)$ is an isomorphism, hence $\varphi^!$ is a shriek map or top degree map in the sense of \cite[Section 5]{LaSt:PE}. Also, the fact that $\Ho^n(R)=\Ho^n(W)=0$ implies that $\Ho^n(l)$ is also an isomorphism, and hence $\lambda\circ l$ is also a shriek map. Therefore,   $\varphi^!\circ \gamma$ and   $\lambda\circ l\circ \gamma'$ are both shriek maps.   The unicity of shriek maps up to multiplication by a scalar (\cite[Proposition 5.6]{LaSt:PE}) implies (multiplying $\gamma$ by a non-zero scalar if necessary), that the following diagram of $P'$-dgmodules commutes up to homotopy
\begin{equation}
\label{d:mapco}
\xymatrix{ \hoker i \ar[r]^l & P' \ar[r]^{\bar \psi}\ar[dd]_{\lambda} & Q   \ar@{=}[dd]\\
D \ar[u]_{\gamma'}^{\simeq} \ar[d]^{\gamma}_{\simeq} && \\
 \sn Q \ar[r]^-{\varphi^!}\ar@{}[ruu]|{\sim} & P\ar[r]^{\varphi} & Q.}
\end{equation}

Composition induces a $P'$-dgmodules morphism between the mapping cone of the morphism $l$ and the mapping cone of the morphism $\bar \psi \circ l$:
\[P'\oplus_{l} s \  \hoker (i) \to Q \oplus_{\bar\psi l} \ s \  \hoker (i).\]
The homotopy commutative  Diagram \eqref{d:mapco} implies that this last morphism is weakly equivalent as a $P'$-dgmodules to the morphism 
\[ P\oplus_{\varphi^!} s\sn Q \to Q\oplus_{\varphi\varphi^!} s\sn Q .
\]

From Diagram \eqref{etruscus:pb} we get a quasi isomorphism $P'\oplus_l \ s \ \hoker (i) \qi R$. Therefore,  the morphism \[P'\oplus_{l} \  s \ \hoker (i) \to Q \oplus_{\bar\psi l} \ s \ \hoker (i)
\] 
is weakly equivalent  to the morphism $\psi : R\to Q\oplus s\sn Q$ which is a model of $\apl (W) \to \apl(\delw)$.  Standard techniques  show that these morphisms are weakly equivalent as CDGA morphism. Since $\varphi\varphi^! = 0$ for degree reasons, we deduce that the morphism 
\[
\varphi \oplus id : P\oplus_{\varphi^!} s\sn Q \to Q\oplus s\sn Q\]
is   a CDGA model of  $\delw \hookrightarrow W$. 
\end{proof}

Interestingly enough the hypothesis in Theorem \ref{T:etruscus} is
only on the boundary $\partial W$. This implies that is $\partial W$
satisfies the hypothesis, then any other pair $(W',\partial W')$ with
$\partial W'=\partial W$ will also admits a pretty model. One can get
many such manifolds $W'$ by performing a connected sum in the
\emph{interior} of $W$ with a simply-connected closed $n$-manifold $N$. Or more
generally  we can modify $W$ by surgeries on its interior, which does
not change its boundary. We can thus perform such surgeries on the manifolds from
Example \ref{Ex:retract} to get many other examples.
 
\section{An open question}

We finish this article by asking whether every Poincar\'e duality pair admits a
pretty model. 
  This would implies that every boundary manifold $\delW$ admits a
  model of the form $Q\oplus_{\varphi\varphi^!} ss^{-n} \#Q$, which is
  a very special form of a Poincar\'e duality CDGA.   Thus a
  preliminary algebraic question might be the following 
\begin{quest}\label{Q:end}
Let $(A,d)$ be a CDGA whose homology satisfies Poincar\'e duality in dimension $n-1$ and whose signature is 0.   Does it always exist a CDGA, $Q$, and a balanced $Q$-dgmodule morphism
\[
\Psi : s^{-n} \#Q \longrightarrow Q
\]
such that $(A,d)$ is quasi-isomorphic to
\[%\label{E:QPsi\#} 
Q\oplus_\Psi ss^{-n} \#Q\quad ?
\]
\end{quest}
A positive answer to this question would be an interesting
reinforcement of the main result of \cite{LaSt:PD}.   Note that we
cannot drop the hypothesis of having zero signature since this is
clearly the case for the CDGA  $Q\oplus_\Psi ss^{-n} \#Q$. Of course a
boundary manifold as $\partial W$ is always of zero
signature. Conversely any rational Poincar\'e duality space with signature $0$
appears as the boundary in some Poincar\'e duality pair, and hence if
every Poincar\'e duality pair admits a pretty model then the answer to
Question \ref{Q:end} will be affirmative.

\bibliographystyle{amsplain}
%\bibliography{bibliography}
%\bibliography{/Users/pascal/Library/texmf/tex/latex/bibliographies/PLbiblio}
\def\cprime{$'$}
\providecommand{\bysame}{\leavevmode\hbox to3em{\hrulefill}\thinspace}
\providecommand{\MR}{\relax\ifhmode\unskip\space\fi MR }
% \MRhref is called by the amsart/book/proc definition of \MR.
\providecommand{\MRhref}[2]{%
  \href{http://www.ams.org/mathscinet-getitem?mr=#1}{#2}
}
\providecommand{\href}[2]{#2}

\end{document}